 \patchcmd\Gread@eps{\@inputcheck#1 }{\@inputcheck"#1"\relax}{}{}
\numberwithin{equation}{section}
\newtheorem{thm}{Theorem}[section]
\newtheorem{cor}[thm]{Corollary}
\newtheorem{lem}[thm]{Lemma}
\newtheorem{prop}[thm]{Proposition}
\theoremstyle{definition}
\newtheorem{dfn}[thm]{Definition}
\newtheorem{ques}{Question}
\theoremstyle{remark}
\newcommand{\reDeclareMathOperator}[2]{\let#1\undefined \DeclareMathOperator{#1}{#2}}
\reDeclareMathOperator{\mod}{mod}
\DeclareMathOperator{\conv}{conv}
\DeclareMathOperator{\diam}{diam}
\reDeclareMathOperator{\supp}{supp}
\DeclareMathOperator*{\argmin}{arg\,min}
\reDeclareMathOperator{\Proj}{Proj}
\DeclareMathOperator{\Fix}{Fix}
\DeclareMathOperator{\cl}{cl}
\renewcommand{\bar}{\overline}
\newcommand{\CAT}{\rm CAT}
\newcommand{\R}{\mathbb{R}}
\newcommand{\N}{\mathbb{N}}
\renewcommand{\to}{\rightarrow}
\newcommand{\tendsto}{\longrightarrow}
\newcommand{\abs}[1]{\left|#1\right|}
\renewcommand{\angle}{\measuredangle}
\renewcommand*{\@fnsymbol}[1]{\ifcase#1\or*\else\@arabic{#1}\fi}
\title{Existence and Approximations for Order-Preserving Nonexpansive Semigroups over $\CAT(\kappa)$ Spaces}
\author{Parin Chaipunya}
\affil{
Theoretical and Computational Science Center (TaCS),\break
King Mongkut's University of Technology Thonburi, \break
126 Pracha Uthit Rd., Bang Mod, Thung Khru,\break
Bangkok 10140, Thailand.\break
Emails: parin.cha@mail.kmutt.ac.th (P. Chaipunya).
\vspace{.25cm}
}
\date{}
\begin{document}
\maketitle \vspace{-1.8cm}
\thispagestyle{empty}
\begin{abstract}

In this paper, we discuss the fixed point property for an infinite family of order-preserving mappings which satisfy the Lipschitzian condition on comparable pairs. The underlying framework of our main results is a metric space of any global upper curvature bound $\kappa \in \R$, i.e., a $\CAT(\kappa)$ space. In particular, we prove the existence of a fixed point for a nonexpasive semigroup on comparable pairs. Then, we propose and analyze two algorithms to approximate such a fixed point.

\medskip

\noindent{\bf Keywords:} Nonexpansive mapping, Semigroup of operators, $\CAT(\kappa)$ space, Partial ordering, Krasnosel'ski\u\i{} approximation, Browder approximation.

\noindent{\bf 2010 MSC:} 47H09, 47H10, 47H20, 47H07.
\end{abstract}


\section{Introduction}

Metric fixed point theory was assumably started in 1922 by the work of Banach where he introduced the famous Contraction Principle with an application to Cauchy differential equations. This well-known principle applies to every complete metric spaces, and has been fruitfully extended to several generalizations of a metric space as well. To appreciate the principle, let us recall that not only the existence and uniqueness of a fixed point is guaranteed but a simple construction of such fixed point is also provided with \emph{a priori} error estimates in terms of the contractivity constant and the initial data.

As almost a century past, the subjects and objects in metric fixed point theory grows vastly, but the main theme undeniably roams around the notion of Lipschitz continuity. Suppose now that $(M,d)$ is a metric space, we say that $T : M \to M$ is \emph{Lipschitzian} if there is a constant $L \geq 0$ such that \begin{equation}\label{eqn:Lip}
d(Tx,Ty) \leq Ld(x,y)
\end{equation}
holds for any $x,y \in M$. If \eqref{eqn:Lip} holds with $L < 1$, we say that $T$ is a \emph{contraction}, while we say that $T$ is \emph{nonexpansive} if \eqref{eqn:Lip} holds with $L = 1$.

Although many generalizations of a contraction have been carried out, the naive extension to nonexpansive mappings seems not as straightforward as it looks. As a quick glance, we may take $X = \R$ and $Tx = 1+x$ for each $x \in X$. Then $T$ is nonexpansive with no fixed point. Existence theorems for nonexpansive mappings officially began in 1965, in Hilbert and certain Banach spaces, after the works of Browder, G\"ohde, and Kirk ( see \cite{MR0178324}, as well as \cite{MR0189009,MR0190718,MR0187120}). The results were generalized to a commuting family of nonexpansive mappings by DeMarr \cite{MR0159314,MR0159229}, and later improved by Lim \cite{MR0365250}.

Assuming the Lipschitzian condition \eqref{eqn:Lip} only on \emph{related} elements has set a new research stream. Ran and Reurings \cite{MR2053350} were the first to investigate such situations in the case $L < 1$ and the elements are related with a \emph{partial ordering}, \emph{i.e.,} a relation which is reflexive, antisymmetric, and transitive. Recall that if $\preceq$ is a partial ordering on a set $X$, then $x,y \in X$ are said to be $\preceq$-comparable if either $x \preceq y$ or $y \preceq x$. The results of Ran and Reurings \cite{MR2053350} were later refined and improved by Nieto and Rodr\'iguez-L\'opez \cite{MR2212687}. These fixed point results were motivated from applications to solve matrix equations and differential equations. 

The studies of nonexpansive mappings endowed with a partial ordering in Banach spaces was first considered by Bachar and Khamsi \cite{MR3368216} and was complemented with the Mann's approximation scheme in \cite{MR3402821}. The topic then extended to an order-preserving nonexpansive semigroup \cite{MR3393811} under the setting of both Banach and hyperbolic metric spaces. Here, the relationship between approximate fixed point sequences of mappings in the semigroup was thoroughly explained. After that, the full existence result for such semigroups was given in \cite{MR3872318,MR3755622} under the framework of Banach spaces, and recently in \cite{MR3830164} for the framework of hyperbolic metric spaces. Finally, an approximation result for this semigroup in Banach spaces was announced by Kozlowski \cite{KOZ} by using the Krasnosel'ski\u\i
{} process. It is important to note that Esp\'inola and Wisnicki \cite{MR3782584} recently gave a general statement that unifies all the existence results mentioned earlier in Banach spaces. The unification in hyperbolic metric spaces is not known due to an open problem about weak topologies in such spaces (see also \cite{MR3003694,MR3241330}).

Let us state the main notion for our study now. Suppose that $(X,d)$ is a metric space endowed with a partial ordering $\preceq$, and $C \subset X$ is nonempty. The family $\Gamma := \{T_{t}\}_{t \in J}$ of mappings from $C$ into itself, where $J$ is a nontrivial subsemigroup of $[0,\infty)$, is called a \emph{$\preceq$-Lipschitzian semigroup on $C$} if the following conditions are satisfied:
\begin{enumerate}[label=(S\arabic*)]
\item $T_{0} = Id_{C}$.
\item $T_{s+t} = T_{s}\circ T_{t}$ for any $s,t \geq 0$.
\item For any $x \in C$, the mapping $t \mapsto T_{t}x$ is continuous.
\item For any $t \in J$, $T_{t}$ preserves $\preceq$ in the sense that $x \preceq y$ implies $T_{t}x \preceq T_{t}y$.
\item For each $t > 0$, the inequality $d(T_{t}x,T_{t}y) \leq L d(x,y)$ holds whenever $x,y \in C$ are $\preceq$-comparable.
\end{enumerate}

In this paper, we consider a $\preceq$-nonexpansive semigroup for which $X$ is a metric space with curvature bounded above by any $\kappa \in \R$, also known as a $\CAT(\kappa)$ space. Recall that each $\CAT(\kappa)$ space is a hyperbolic metric space if $\kappa \leq 0$. It is, however, unknown to the case $\kappa > 0$.

Our main results can be broke down into three parts. First, we establish an existence result under the assumptions of $C$ being bounded, closed, and convex. Second, we propose a Krasnosel'ski\u\i{} approximation scheme, similarly to \cite{KOZ}, and show its convergence property. Note that the assumptions made in this part are only applicable to discrete (i.e., countable) semigroups. This motivates us to study the final part, where we propose the Browder approximation scheme and show appropriate convergence property. As opposed to the Krasnosel'ski\u\i{} scheme, the Browder's is implicit. However, the assumptions for the convergence are less restrictive and applies to any semigroups. The techniques used in this last part are adapted and simplified from \cite{MR2514758,MRHUANG}. Moreover, to the best of our knowledge, the Browder scheme has not yet been investigated for ordered version of Lipschitzian semigroups even in Banach or Hilbert spaces.

The organization of this paper is as follows: The next section collects all the prerequisites of $\CAT$ spaces. Sections 3--5 contains our main materials from Existence Theorems to Explicit and Implicit Approximation Schemes, respectively. The final section then concludes all the results and provides additional remarks and open questions.

\section{Preliminaries}

In this section, we shall recall the prerequisited knowledge for our main results in the next sections. We begin with the notion of geodesic metric spaces and the defining properties of $\CAT$ spaces.

Suppose that $(X,d)$ is a metric space. A \emph{geodesic} in $X$ is a curve $c : I \to X$, where $I \subset \R$ is a compact interval, and $d(c(s),c(t)) = \abs{s-t}$ holds for any $s,t \in I$. In other words, a geodesic is curve in $X$ which is isometry to some compact real interval. Without loss of generality, always assume that $I = [0,T]$ for some $T$. If $c(0) = x$ and $c(T) = y$, we say that $c$ joins $x$ and $y$. Let $D \in (-\infty,+\infty]$. If for any $x,y \in X$ with $d(x,y) < D$ are joined by a geodesic, then $X$ is said to be \emph{$D$-geodesic}. If such geodesic is unique, we further say that $X$ is \emph{$D$-uniquely geodesic}. In the latter case, we write $\llbracket x,y \rrbracket := c(I)$ to denote the (unique) geodesic segment. If $X$ is $\infty$-geodesic (or $\infty$-uniquely geodesic), we say that $X$ is geodesic (or uniquely geodesic).

If $X$ is $D$-uniquely geodesic and $c : [0,T] \to X$ joins $x$ and $y$, we write $(1-\lambda)x \oplus \lambda y := c(\lambda T)$ for $\lambda \in [0,1]$. If $C \subset X$ and $\llbracket x,y \rrbracket \subset C$ for every $x,y \in C$, then $C$ is called \emph{convex}. A function $f : C \to \R$ is called \emph{convex} if $C$ is convex and $f((1-\lambda)x\oplus \lambda y) \leq (1-\lambda)f(x) + \lambda f(y)$ for any $x,y \in C$ and $\lambda \in [0,1]$.

Let $\mathcal{M}_{\kappa}$ be the simply-connected Riemannian 2-surface of constant sectional curvature $\kappa$. Denoted by $d_{\kappa}$ the intrinsic distance function on $\mathcal{M}_{\kappa}$, $\angle^{(\kappa)}_{q}$ the angle at vertex $q \in \mathcal{M}_{\kappa}$, and by $D_{\kappa}$ the diameter of $\mathcal{M}_{\kappa}$. To be precise, we have $D_{\kappa} = \infty$ for $\kappa \leq 0$ and $D_{\kappa} = \pi/\sqrt{\kappa}$ for $\kappa > 0$. Note that $\mathcal{M}_{0} = \R^{2}$ and that $\mathcal{M}_{1} = \mathbb{S}^{2}$. To see a more detailed explanation of the subject, refer to \cite{MR1744486,MR1835418}.

Among other things, the following identity, known as the spherical law of cosines, serves as the main tool for our analsis.
\begin{prop}[\cite{MR1744486}]\label{prop:COSINE}
Suppose that $\Delta$ is a geodesic triangle in $\mathcal{M}_{\kappa}$ with $\kappa > 0$. If $\Delta$ has side lenghts $a,b,c > 0$, and $\gamma > 0$ is the angle opposite to the side with length $c$. Then,
\[
\cos (\sqrt{\kappa} c) = \cos (\sqrt{\kappa} a) \cos (\sqrt{\kappa} b) + \sin (\sqrt{\kappa} a) \sin (\sqrt{\kappa} b) \cos \gamma.
\]
\end{prop}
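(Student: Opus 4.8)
The plan is to reduce the statement to the classical spherical law of cosines on the \emph{unit} sphere $\mathbb{S}^{2}$, and then to prove that classical identity by a one-line computation inside the standard embedding $\mathbb{S}^{2}\subset\R^{3}$. First I would recall that for $\kappa>0$ the model surface $\mathcal{M}_{\kappa}$ is, by definition, the round sphere of radius $R:=1/\sqrt{\kappa}$, so that the dilation $x\mapsto x/R$ is a similarity of $\mathcal{M}_{\kappa}$ onto $\mathbb{S}^{2}$ which multiplies every intrinsic distance by $\sqrt{\kappa}$ and leaves every angle unchanged. Under this similarity the triangle $\Delta$ is carried to a geodesic triangle on $\mathbb{S}^{2}$ with side lengths $\hat a:=\sqrt{\kappa}a$, $\hat b:=\sqrt{\kappa}b$, $\hat c:=\sqrt{\kappa}c$ and with the same angle $\gamma$ opposite the side of length $\hat c$. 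Since $a,b,c<D_{\kappa}=\pi/\sqrt{\kappa}$ (this is forced by the mere existence of the geodesic sides), we have $\hat a,\hat b,\hat c\in(0,\pi)$, so all quantities below are unambiguous, and it suffices to prove
\[
\cos\hat c=\cos\hat a\cos\hat b+\sin\hat a\sin\hat b\cos\gamma .
\]

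Next I would set up coordinates on $\mathbb{S}^{2}$ using two standard facts: the intrinsic distance between $u,v\in\mathbb{S}^{2}$ is $\arccos\product{u}{v}\in[0,\pi]$ (the geodesics being arcs of great circles), and the angle of a spherical triangle at a vertex equals the Euclidean angle between the initial velocity vectors of the two sides issuing from that vertex. Place the vertex of $\Delta$ opposite the side of length $\hat c$ at the north pole $e_{3}$. The other two vertices lie at distances $\hat b$ and $\hat a$ from $e_{3}$ along great-circle arcs, hence are
\[
u=\cos\hat b\,e_{3}+\sin\hat b\,w_{1},\qquad v=\cos\hat a\,e_{3}+\sin\hat a\,w_{2},
\]
for some unit vectors $w_{1},w_{2}$ in the equatorial plane $\{x_{3}=0\}$. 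The velocity at $e_{3}$ of the arc $t\mapsto\cos t\,e_{3}+\sin t\,w_{i}$ is exactly $w_{i}$, so the angle between $w_{1}$ and $w_{2}$ is $\gamma$; after a rotation about the $e_{3}$-axis I may assume $w_{1}=e_{1}$ and $w_{2}=\cos\gamma\,e_{1}+\sin\gamma\,e_{2}$.

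Finally I would compute, using $\product{e_{3}}{w_{i}}=0$ and $\product{w_{1}}{w_{2}}=\cos\gamma$,
\[
\cos\hat c=\product{u}{v}=\cos\hat a\cos\hat b+\sin\hat a\sin\hat b\,\product{w_{1}}{w_{2}}=\cos\hat a\cos\hat b+\sin\hat a\sin\hat b\cos\gamma ,
\]
and then substitute back $\hat a=\sqrt{\kappa}a$, $\hat b=\sqrt{\kappa}b$, $\hat c=\sqrt{\kappa}c$ to obtain the stated formula.

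The computation itself is trivial, so the only genuine content — and the one spot where care is required — lies in the preliminary identifications: that for $\kappa>0$ the space $\mathcal{M}_{\kappa}$ really is (isometric to) the round sphere of radius $1/\sqrt{\kappa}$, that its geodesics are precisely the great-circle arcs, and that the model angle $\angle^{(\kappa)}$ coincides with the Euclidean angle of the embedded initial velocities. These are all standard structural facts about the model spaces, recorded for instance in \cite{MR1744486}, so I would cite them rather than reprove them; everything else is elementary inner-product algebra in $\R^{3}$.
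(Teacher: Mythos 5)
Your argument is correct, and it is essentially the proof of this identity given in the source the paper cites: the paper itself states Proposition \ref{prop:COSINE} as a quoted background fact from \cite{MR1744486} and offers no proof, while \cite{MR1744486} establishes the $\kappa>0$ law of cosines exactly as you do, by rescaling $\mathcal{M}_{\kappa}$ to the unit sphere embedded in $\R^{3}$ and computing inner products after placing one vertex at the pole. The only cosmetic imprecision is the claim that existence of the geodesic sides forces $a,b,c<\pi/\sqrt{\kappa}$ (antipodal points are still joined by geodesics, only non-uniquely), but the computation is unaffected since every quantity remains well defined for $\hat a,\hat b,\hat c\in(0,\pi]$.
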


Fix $\kappa \in \R$ and let $X$ be $D_{\kappa}$-uniquely geodesic. For each points $p,q,r \in X$, the geodesic triangle $\Delta \subset X$ is defined by $\Delta(p,q,r) := \llbracket p,q \rrbracket \cup \llbracket q,r \rrbracket \cup \llbracket r,p \rrbracket$. The geodesic triangle $\overline{\Delta} := \Delta(\bar{p},\bar{q},\bar{r})$ with $\bar{p},\bar{q},\bar{r} \in \mathcal{M}_{\kappa}$ is said to be a \emph{$\kappa$-comparison triangle} (or simply \emph{comparison triangle}) if $d_{\kappa}(\bar{p},\bar{q}) = d(p,q)$, $d_{\kappa}(\bar{q},\bar{r}) = d(q,r)$, and $d_{\kappa}(\bar{r},\bar{p}) = d(r,p)$. Note that the triangle inequality of $d$ implies the existence of such comparison triangle. Moreover, the comparison triangle of each geodesic triangle in $X$ is unique up to rigid motions. Suppose that $\Delta(p,q,r) \subset X$ is a geodesic triangle whose comparison triangle is $\Delta(\bar{p},\bar{q},\bar{r})$. Given $u \in \llbracket p,q \rrbracket$, the point $\bar{u} \in \llbracket \bar{p},\bar{q} \rrbracket$ is said to be a \emph{comparison point} of $u$ if $d_{\kappa}(\bar{p},\bar{u}) = d(p,u)$. Comparison points for $u' \in \llbracket q,r \rrbracket$ and $u'' \in \llbracket r,p \rrbracket$ are defined likewise.

\begin{dfn}
Given $\kappa \in \R$. A $D_{\kappa}$-geodesic metric space $(X,d)$ is said to be a \emph{$\CAT(\kappa)$ space} if for each geodesic triangle $\Delta \subset X$ and two points $u,v \in \Delta$, the following $\CAT(\kappa)$ inequality holds:
\[
d(u,v) \leq d_{\kappa}(\bar{u},\bar{v}),
\]
where $\bar{u},\bar{v} \in \overline{\Delta}$ are the comparison points of $u$ and $v$, respectivly, and $\bar{\Delta} \subset \mathcal{M}_{\kappa}$ is a $\kappa$-comparison triangle of $\Delta$.
\end{dfn}

Let us give now the following fundamental facts.
\begin{lem}[\cite{MR1744486},\cite{MR2322550}]\label{lem:basicCAT(k)}
Suppose that $(X,d)$ is a complete $\CAT(\kappa)$ space. Then, the following are satisfied:
\begin{enumerate}[label=(\roman*)]
\item (\cite{MR1744486}) $X$ is also a $\CAT(\kappa')$ space for all $\kappa' \geq \kappa$.
\item (\cite{MR1744486}) For each $p \in X$, the function $d(\cdot,p)|_{B(p,D_{\kappa}/2)}$ is convex.
\item (\cite{MR2322550}) If $\kappa > 0$ and $C \subset X$ is nonempty, closed, convex, and bounded with $\diam(C) < D_{\kappa}/2$, then
\[
d^{2}(p,(1-t)x \oplus ty) \leq (1-t)d^{2}(p,x) + td^{2}(p,y) - \frac{k}{2}t(1-t)d^{2}(x,y)
\]
for $p,x,y \in C$ and $t \in [0,1]$, where $k := 2\diam(C)\tan(D_{\kappa}/2 - \diam(C))$.
\end{enumerate}
\end{lem}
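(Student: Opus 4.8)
Parts (i) and (ii) are classical facts about $\CAT(\kappa)$ spaces and I would simply invoke \cite[Ch.~II]{MR1744486}. Part (i) rests on the monotonicity of the model surfaces in $\kappa$: for a geodesic triangle with prescribed side lengths, corresponding chords in $\mathcal M_{\kappa'}$ are no shorter than those in $\mathcal M_{\kappa}$ when $\kappa'\geq\kappa$, so the $\CAT(\kappa)$ inequality forces the $\CAT(\kappa')$ one. Part (ii) is obtained by comparing a geodesic triangle $\Delta(p,x,y)$ with $x,y\in B(p,D_\kappa/2)$ against its model triangle and using that on $\mathcal M_\kappa$ the distance to a fixed point is convex along geodesics lying in a ball of radius $D_\kappa/2$. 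The real content is part (iii), which I would prove in three steps.

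\emph{Step 1: reduction to the model space.} Rescaling $d$ by $\sqrt\kappa$, I may assume $\kappa=1$, so that $D_\kappa=\pi$ and $\diam(C)<\pi/2$; the constant becomes $2\diam(C)\tan(\tfrac\pi2-\diam(C))$ (in terms of the rescaled diameter), which I would carry along and rescale back at the end. Put $m:=(1-t)x\oplus ty$, fix a $1$-comparison triangle $\Delta(\bar p,\bar x,\bar y)\subset\mathcal M_1$ of $\Delta(p,x,y)$, and set $\bar m:=(1-t)\bar x\oplus t\bar y$. From $d(x,m)=t\,d(x,y)=t\,d_1(\bar x,\bar y)=d_1(\bar x,\bar m)$ we see that $\bar m$ is exactly the comparison point of $m$, so the $\CAT(1)$ inequality gives $d(p,m)\leq d_1(\bar p,\bar m)$, while $d(p,x),d(p,y),d(x,y)$ equal the corresponding model distances. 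Hence it suffices to establish the inequality inside $\mathcal M_1$ for $\bar p,\bar x,\bar y,\bar m$, and there every pairwise distance among these four points is $\leq\diam(C)<\pi/2$ --- the only one that needs a word is $d_1(\bar p,\bar m)$, bounded by $\max\{d_1(\bar p,\bar x),d_1(\bar p,\bar y)\}$ because $\llbracket\bar x,\bar y\rrbracket$ lies inside the geodesically convex ball $B(\bar p,\pi/2)$, on which $d_1(\bar p,\cdot)$ is convex.

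\emph{Step 2: the spherical computation.} Set $a:=d_1(\bar p,\bar x)$, $b:=d_1(\bar p,\bar y)$, $c:=d_1(\bar x,\bar y)$, and dispose of the trivial case $c=0$. Applying Proposition~\ref{prop:COSINE} to the triangles $(\bar p,\bar x,\bar m)$ and $(\bar p,\bar x,\bar y)$, which share the angle at $\bar x$, and eliminating that angle, I would obtain the exact identity
\[
\cos d_1(\bar p,\bar m)=\frac{\sin((1-t)c)}{\sin c}\,\cos a+\frac{\sin(tc)}{\sin c}\,\cos b .
\]
Substituting $\cos s=1-2\sin^2(s/2)$ and using $\frac{\sin((1-t)c)+\sin(tc)}{\sin c}=1+\frac{2\sin(\frac{(1-t)c}{2})\sin(\frac{tc}{2})}{\cos(\frac c2)}$, this rewrites as
\[
\sin^2\!\Big(\tfrac{d_1(\bar p,\bar m)}{2}\Big)=\frac{\sin((1-t)c)}{\sin c}\sin^2\!\big(\tfrac a2\big)+\frac{\sin(tc)}{\sin c}\sin^2\!\big(\tfrac b2\big)-\frac{\sin(\frac{(1-t)c}{2})\,\sin(\frac{tc}{2})}{\cos(\frac c2)},
\]
where the last term is $\geq 0$.

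\emph{Step 3: from sines to squares, and the constant.} It remains to turn the last identity into the claimed quadratic inequality, and this is the only genuine difficulty. One must compare $\sin$ and $1-\cos$ with their arguments on $[0,\diam(C)]\subset[0,\tfrac\pi2)$ --- e.g.\ using that $s\mapsto\frac{1-\cos s}{s^2}$ is nonincreasing and $\sin s\geq s\cos(\diam(C))$ --- sharply enough that the distortion coming from the first two terms is absorbed by the nonnegative defect term and that the coefficient of $-t(1-t)c^2$ comes out to be exactly $\tfrac k2$ with $k=2\diam(C)\tan(\tfrac\pi2-\diam(C))$; undoing the Step 1 rescaling then gives $k=2\diam(C)\tan(D_\kappa/2-\diam(C))$. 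The point to watch --- and the main obstacle --- is that any wasteful estimate spoils the sharp value of $k$ (indeed $k\to 2$ as $\diam(C)\to 0$, recovering the $\CAT(0)$ inequality, and $k\downarrow 0$ as $\diam(C)\uparrow D_\kappa/2$), so the estimates have to be balanced carefully; I would carry out this bookkeeping following \cite{MR2322550}.
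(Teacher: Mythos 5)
The paper does not actually prove this lemma: it is quoted verbatim from \cite{MR1744486} (parts (i)--(ii)) and \cite{MR2322550} (part (iii)), so there is no internal proof to compare against, and your reconstruction follows exactly the route of those sources. For (iii), your Steps 1 and 2 are correct: the reduction to $\mathcal{M}_{1}$ via the $\CAT(1)$ inequality is sound (the comparison point of $m=(1-t)x\oplus ty$ is indeed $(1-t)\bar{x}\oplus t\bar{y}$, and replacing $d(p,m)$ by the larger $d_{1}(\bar{p},\bar{m})$ only strengthens what must be shown), and both the ``spherical Stewart'' identity obtained by eliminating the angle at $\bar{x}$ and its rewriting in terms of $\sin^{2}(\cdot/2)$ check out. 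The one reservation is Step 3, which you yourself flag: the passage from that identity to the quadratic inequality with the \emph{exact} constant $k=2\diam(C)\tan(D_{\kappa}/2-\diam(C))$ is precisely the quantitative content of Ohta's theorem, and you describe the needed comparisons of $\sin$ and $1-\cos$ with their arguments without carrying them out, deferring to \cite{MR2322550}. As a self-contained proof the argument therefore stops one step short of the only genuinely nontrivial estimate; as a justification of a cited lemma it is already more than the paper itself supplies, and the approach is the standard one.
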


Next, let $c,c'$ be two geodesics in a $\CAT(\kappa)$ space $X$ in which $c(0) = c'(0) = p$ and images of both $c$ and $c'$ are not singleton. We define the \emph{Alexandrov angle} between $c$ and $c'$ by
\[
\angle_{p}(c,c') := \limsup_{t,t' \tendsto 0^{+}} \angle^{(\kappa)}_{\bar{p}} (\bar{c(t)},\bar{c'(t')}),
\]
where $\Delta(\bar{p},\bar{c(t)},\bar{c'(t')})$ is the $\kappa$-comparison triangle of $\Delta(p,c(t),c'(t'))$ for each $t,t' > 0$ near $0$. Note that $\angle$ is symmetric and satisfies the triangle inequality whenever all the angles are defined (see \cite{MR1744486}).

In 2009, Esp\'inola and Fern\'andez-Le\'on \cite{MR2508878} studies several results related to fixed point theory and convex analysis. There are two basic results established in this paper. One is the generalization of $\Delta$-convergence to general $\CAT(\kappa)$ spaces (the concept was originally given on $\CAT(0)$ spaces earlier in \cite{MR2416076}), and the other is the well-definition of a metric projection. Now, let us recall the notions and properties of the $\Delta$-convergence.

Suppose that $(X,d)$ is a $\CAT(\kappa)$ space and $(x^{k})$ a bounded sequence in $X$. Put $\tau(x;(x^{k})) := \limsup_{k \tendsto \infty} d(x^{k},x)$ for each $x \in X$, and $A(x^{k}) := \argmin_{X} \tau(\cdot;(x^{k})$.

\begin{dfn}[\cite{MR2508878}]
A bounded sequence $(x^{k})$ in $X$ is said to be \emph{$\Delta$-convergent} to a point $\bar{x} \in X$ if $A(u^{k}) = \{\bar{x}\}$ for every subsequence $(u^{k})$ of $(x^{k})$.
\end{dfn}

\begin{prop}[\cite{MR2508878}]\label{prop:Deltalimit-Conv}
If $\inf_{x \in X} \tau(x) < D_{\kappa}/2$, then the following are satisfied:
\begin{enumerate}[label=(\roman*)]
\item $A(x^{k})$ is singleton.
\item $(x^{k})$ contains a $\Delta$-convergent subsequence.
\item If $(x^{k})$ is $\Delta$-convergent to $x \in X$, then $x \in \bigcap_{n \in \N} \cl\conv\{x^{n},x^{n+1},\dots\}$.
\end{enumerate}
\end{prop}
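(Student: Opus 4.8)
Write $\tau(\cdot)=\tau(\cdot;(x^{k}))$ and $r:=\inf_{X}\tau$; by hypothesis $r<D_{\kappa}/2$, and $A((x^{k}))=\{x\in X:\tau(x)=r\}$. Throughout I take $X$ complete, so that $A(\cdot)$ is nonempty for every bounded sequence whose asymptotic radius is below $D_{\kappa}/2$.

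\emph{Part (i).}
The plan is to exploit a uniform-convexity behaviour of $\tau$ along short geodesics. Let $x_{1},x_{2}\in A((x^{k}))$ and let $m$ be the midpoint of $\llbracket x_{1},x_{2}\rrbracket$; I will show $\tau(m)<r$ whenever $x_{1}\neq x_{2}$, which contradicts $r=\inf_{X}\tau$. If $\kappa\leq 0$, then $X$ is also $\CAT(0)$ by Lemma~\ref{lem:basicCAT(k)}(i), and the classical midpoint inequality $d^{2}(x^{k},m)\leq\tfrac12 d^{2}(x^{k},x_{1})+\tfrac12 d^{2}(x^{k},x_{2})-\tfrac14 d^{2}(x_{1},x_{2})$ gives, after $\limsup_{k}$, that $\tau(m)^{2}\leq r^{2}-\tfrac14 d^{2}(x_{1},x_{2})$. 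If $\kappa>0$, then $\limsup_{k}d(x^{k},x_{j})=\tau(x_{j})=r<D_{\kappa}/2$, so for large $k$ the triangle $\Delta(x^{k},x_{1},x_{2})$ admits a comparison triangle in $\mathcal{M}_{\kappa}$; summing the two instances of Proposition~\ref{prop:COSINE} for the sub-triangles cut off by the median through the comparison point $\bar{m}$ (the midpoint of $[\bar{x}_{1},\bar{x}_{2}]$) yields $\cos\!\bigl(\sqrt{\kappa}\,d_{\kappa}(\bar{x}^{k},\bar{m})\bigr)=\bigl[\cos(\sqrt{\kappa}\,d(x^{k},x_{1}))+\cos(\sqrt{\kappa}\,d(x^{k},x_{2}))\bigr]\big/\bigl(2\cos(\tfrac{\sqrt{\kappa}}{2}d(x_{1},x_{2}))\bigr)$, and combining this with the $\CAT(\kappa)$ inequality $d(x^{k},m)\leq d_{\kappa}(\bar{x}^{k},\bar{m})$ and letting $k\to\infty$ gives $\cos(\sqrt{\kappa}\,\tau(m))\geq\cos(\sqrt{\kappa}\,r)\big/\cos(\tfrac{\sqrt{\kappa}}{2}d(x_{1},x_{2}))$. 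Here the hypothesis is decisive: $r<D_{\kappa}/2$ forces $\cos(\sqrt{\kappa}\,r)>0$, so if $d(x_{1},x_{2})>0$ the right-hand side strictly exceeds $\cos(\sqrt{\kappa}\,r)$, whence $\tau(m)<r$. In both cases $x_{1}=x_{2}$, so $A((x^{k}))$ is a singleton.

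\emph{Part (ii).}
The backbone is that the asymptotic radius is monotone under passing to subsequences: if $(u^{k})$ is a subsequence of $(x^{k})$ then $\tau_{(u^{k})}\leq\tau_{(x^{k})}$ pointwise, hence $r_{(u^{k})}:=\inf_{X}\tau_{(u^{k})}\leq r$. I would then run a diagonal extraction: build nested subsequences $(x^{k})\supseteq(u^{k}_{1})\supseteq(u^{k}_{2})\supseteq\cdots$ in which $r_{(u^{k}_{n+1})}$ lies within $2^{-n}$ of $\rho_{n}:=\inf\{r_{(v^{k})}:(v^{k})\text{ a subsequence of }(u^{k}_{n})\}$ --- a quantity non-decreasing in $n$ and bounded above by $r$ --- and set $w^{k}:=u^{k}_{k}$. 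Since a tail of $(w^{k})$ is a subsequence of every $(u^{k}_{n})$, one reads off that $r_{(v^{k})}$ equals the common value $\rho:=\lim_{n}\rho_{n}\leq r$ for $(w^{k})$ itself and for \emph{every} subsequence $(v^{k})$ of it. Finally, writing $A((w^{k}))=\{w\}$ and $A((v^{k}))=\{v\}$ (singletons by part (i), as $\rho<D_{\kappa}/2$), monotonicity gives $\tau_{(v^{k})}(w)\leq\tau_{(w^{k})}(w)=\rho=\inf_{X}\tau_{(v^{k})}$, so $w$ also minimizes $\tau_{(v^{k})}$ and therefore $w=v$; hence $(w^{k})$ is $\Delta$-convergent.

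\emph{Part (iii).}
Suppose $(x^{k})$ is $\Delta$-convergent to $x$; then $A((x^{k}))=\{x\}$, i.e.\ $\tau(x)=r$. Fix $n$ and put $C:=\cl\conv\{x^{n},x^{n+1},\dots\}$, a nonempty bounded closed convex set. As $\limsup$ disregards finitely many terms, the tail $(x^{k})_{k\geq n}$ lies in $C$ and carries the same functional $\tau$, hence the same asymptotic center $x$. Assume $x\notin C$ and let $y$ be the metric projection of $x$ onto $C$, which is well defined here by closedness and boundedness of $C$, completeness of $X$, and, when $\kappa>0$, the bound $\dist(x,C)\leq r<D_{\kappa}/2$ furnished by the hypothesis (cf.\ \cite{MR2508878}). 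The characterising obtuse-angle property of $y$, inserted into the $\CAT(\kappa)$ inequality, yields a quantitative strict estimate $d^{2}(x^{k},y)\leq d^{2}(x^{k},x)-c\,d^{2}(x,y)$ for $k\geq n$ with $c>0$ (in the spherical case the constant is extracted from Proposition~\ref{prop:COSINE}); taking $\limsup_{k}$ gives $\tau(y)<\tau(x)=r$, contradicting $r=\inf_{X}\tau$. Therefore $x\in C$, and since $n$ was arbitrary, $x\in\bigcap_{n\in\N}\cl\conv\{x^{n},x^{n+1},\dots\}$.

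\emph{Main difficulty.}
The case $\kappa\leq 0$ is essentially bookkeeping with the $\CAT(0)$ (Bruhat--Tits) inequality. The genuine work is the case $\kappa>0$: one must repeatedly verify that all the points involved --- the sequence tail, the candidate centers, the geodesic midpoints, the projection --- remain inside a region small enough for the spherical comparison machinery (Lemma~\ref{lem:basicCAT(k)}(iii), Proposition~\ref{prop:COSINE}, well-posedness of the metric projection, and strict monotonicity of $t\mapsto\cos(\sqrt{\kappa}t)$ on the relevant range) to apply. It is precisely the hypothesis $\inf_{X}\tau<D_{\kappa}/2$ that secures this, by pinning the asymptotic radius, and with it all the relevant distances, strictly below $D_{\kappa}/2$.
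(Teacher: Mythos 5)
The paper offers no proof of this proposition: it is quoted verbatim from Esp\'inola and Fern\'andez-Le\'on \cite{MR2508878}, so there is no in-paper argument to compare yours against. Judged on its own, your reconstruction is correct and follows the same route as the cited source: the spherical midpoint identity obtained by summing the two instances of Proposition~\ref{prop:COSINE} across the median is exactly the uniform-convexity estimate that drives (i); the nested-subsequence/diagonal extraction producing a ``regular'' subsequence, combined with (i), is the standard Kirk--Panyanak argument for (ii); and the projection-plus-obtuse-angle contradiction is the standard proof of (iii). One small point you gloss over: you assert nonemptiness of $A((x^{k}))$ from completeness alone, but this itself needs the convexity estimate --- the same inequality $\cos(\sqrt{\kappa}\,\tau(m))\geq\cos(\sqrt{\kappa}\,r)/\cos(\tfrac{\sqrt{\kappa}}{2}d(x_{1},x_{2}))$ you derive in (i) shows that any minimizing sequence for $\tau$ is Cauchy (if two points both nearly minimize $\tau$ and are far apart, their midpoint would push $\tau$ below its infimum), and completeness then yields a minimizer. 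Making that remark explicit would close the only gap in an otherwise faithful account.
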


Finally, let us give the basic results of a metric projection in the following.

\begin{prop}[\cite{MR2508878}]\label{prop:proj}
Let $C \subset X$ be nonempty, closed, and convex, and $x \in X$ with $\inf_{c \in C} d(x,c) < D_{\kappa}/2$. Then, the following are satisfied:
\begin{enumerate}[label=(\roman*)]
\item The infimum $\inf_{c \in C} d(x,c)$ is uniquely attained. The minimizer is denoted by $\Proj_{C}(x)$.
\item If $x \not\in C$ and $y \in C\setminus\{\Proj_{C}(x)\}$, then $\angle_{\Proj_{C}(x)}(x,y) \geq \pi/2$.
\end{enumerate}
\end{prop}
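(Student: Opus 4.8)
The plan is to prove the two parts separately, in each case importing the trigonometry of the model surface $\mathcal{M}_{\kappa}$ through the $\CAT(\kappa)$ inequality. For (i) I would obtain existence from the $\Delta$-convergence machinery of Proposition~\ref{prop:Deltalimit-Conv} and uniqueness from strict convexity of the distance function in $\mathcal{M}_{\kappa}$ on balls of radius $<D_{\kappa}/2$; for (ii) I would transcribe the Hilbert-space variational-inequality argument (the relation ``$\langle x-p,\,y-p\rangle\le0$''), replacing the inner-product estimate by the law of cosines, Proposition~\ref{prop:COSINE}.

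For the existence part of (i), set $r:=\inf_{c\in C}d(x,c)<D_{\kappa}/2$ and pick a minimizing sequence $(c_{n})\subset C$, so $d(x,c_{n})\to r$. It is bounded and $\tau(x;(c_{n}))=\limsup_{n}d(x,c_{n})=r$, hence $\inf_{z\in X}\tau(z;(c_{n}))\le r<D_{\kappa}/2$, and Proposition~\ref{prop:Deltalimit-Conv}(ii)--(iii) yields a subsequence $\Delta$-convergent to some $\bar c$ with $\bar c\in\bigcap_{m}\cl\conv\{c_{n}:n\ge m\}$. Since $C$ is closed and convex, $\bar c\in C$, whence $d(x,\bar c)\ge r$. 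Conversely, for each $\varepsilon>0$ with $r+\varepsilon<D_{\kappa}/2$ the tail $\{c_{n}:n\ge m\}$ eventually lies in $\bar B(x,r+\varepsilon)$, which is closed and convex by Lemma~\ref{lem:basicCAT(k)}(ii), so $\bar c\in\bar B(x,r+\varepsilon)$, i.e. $d(x,\bar c)\le r+\varepsilon$; letting $\varepsilon\downarrow0$ gives $d(x,\bar c)=r$, so the infimum is attained at a point I shall call $\Proj_{C}(x)$. For uniqueness, if $c_{1},c_{2}\in C$ both realize $r$, then $m:=\tfrac12 c_{1}\oplus\tfrac12 c_{2}\in C$, hence $d(x,m)\ge r$; choosing a $\kappa$-comparison triangle $\Delta(\bar x,\bar c_{1},\bar c_{2})$ of $\Delta(x,c_{1},c_{2})$ (it exists, as $d(c_{1},c_{2})\le2r<D_{\kappa}$), the $\CAT(\kappa)$ inequality gives $r\le d(x,m)\le d_{\kappa}\!\big(\bar x,\tfrac12\bar c_{1}\oplus\tfrac12\bar c_{2}\big)$, while strict convexity of $d_{\kappa}(\bar x,\cdot)$ on $B(\bar x,D_{\kappa}/2)$ \cite{MR1744486} forces the right-hand side to be $<r$ whenever $\bar c_{1}\neq\bar c_{2}$. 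Therefore $c_{1}=c_{2}$.

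For (ii), put $p:=\Proj_{C}(x)$; since $x\notin C$, $d(x,p)=r>0$. Fix $y\in C\setminus\{p\}$ and let $c^{x},c^{y}$ be the geodesics issuing from $p$ towards $x$ and $y$ (the relevant distances being $<D_{\kappa}$). For small $t,s>0$ let $a:=c^{x}(t)\in\llbracket p,x\rrbracket$ and $b:=c^{y}(s)\in\llbracket p,y\rrbracket\subset C$. Since $b\in C$ we have $d(x,b)\ge r=d(x,p)$, and since $a$ lies on $\llbracket p,x\rrbracket$ we have $d(x,a)=d(x,p)-t$; thus the triangle inequality $d(x,b)\le d(x,a)+d(a,b)$ produces the decisive estimate
\[
d(a,b)\ \ge\ d(x,b)-d(x,p)+t\ \ge\ t\ =\ d(p,a),
\]
which is the metric avatar of the obtuse-angle relation. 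Now take a $\kappa$-comparison triangle $\Delta(\bar p,\bar a,\bar b)$ of $\Delta(p,a,b)$ (its perimeter is at most $2(t+s)<2D_{\kappa}$) and write $\theta(t,s):=\angle^{(\kappa)}_{\bar p}(\bar a,\bar b)$. Applying Proposition~\ref{prop:COSINE} to this triangle --- sides $t$ and $s$ meeting at $\bar p$, opposite side $d(a,b)\ge t$ --- and using $\cos(\sqrt{\kappa}\,d(a,b))\le\cos(\sqrt{\kappa}\,t)$, one obtains after rearrangement
\[
\cos\theta(t,s)\ \le\ \cot(\sqrt{\kappa}\,t)\,\tan\!\Big(\tfrac{\sqrt{\kappa}\,s}{2}\Big),
\]
whose right-hand side tends to $0$ as $s\to0^{+}$. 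Hence $\liminf_{s\to0^{+}}\theta(t,s)\ge\pi/2$ for every fixed small $t>0$, and consequently $\angle_{p}(x,y)=\limsup_{t,s\to0^{+}}\theta(t,s)\ge\pi/2$. When $\kappa\le0$ the same computation goes through with the Euclidean, resp. hyperbolic, law of cosines replacing Proposition~\ref{prop:COSINE}.

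I expect the real work to lie not in the ideas but in the curvature bookkeeping: at each step one must check that the comparison triangle invoked actually exists (perimeter $<2D_{\kappa}$ when $\kappa>0$), that balls of radius $<D_{\kappa}/2$ are convex, and that every argument of $\sin$ and $\cos$ stays in $[0,\pi]$ so that monotonicity of $\cos$ may be used --- and it is exactly the hypothesis $\inf_{c\in C}d(x,c)<D_{\kappa}/2$ that buys all of this and keeps $d_{\kappa}(\bar x,\cdot)$ strictly convex, away from the cut locus of $\mathcal{M}_{\kappa}$. A secondary subtlety in (ii) is that the displayed estimate is only seen to vanish as $s\to0^{+}$ along each slice $t=\mathrm{const}$, so one must argue separately --- as above --- that this already forces the joint $\limsup$ over $(t,s)$ to be $\ge\pi/2$. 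By contrast the genuine core of (ii), the inequality $d(a,b)\ge d(p,a)$, is entirely elementary, and (i) reduces to two standard facts: the $\Delta$-convergence package and strict convexity of distance on balls in $\mathcal{M}_{\kappa}$.
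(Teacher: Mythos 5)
The paper offers no proof of Proposition~\ref{prop:proj} to compare against: it is imported verbatim from Esp\'inola and Fern\'andez-Le\'on \cite{MR2508878}. Your argument is essentially the standard proof from that source and it checks out: existence via $\Delta$-compactness of a minimizing sequence together with Proposition~\ref{prop:Deltalimit-Conv}(iii) and the convexity of small balls; uniqueness via strict convexity of $d_{\kappa}(\bar{x},\cdot)$ on $B(\bar{x},D_{\kappa}/2)$ transported through the $\CAT(\kappa)$ inequality; and the obtuse-angle property from the elementary estimate $d(a,b)\geq d(p,a)$ fed into Proposition~\ref{prop:COSINE}, with the bound $\cos\theta(t,s)\leq\cot(\sqrt{\kappa}\,t)\tan(\sqrt{\kappa}\,s/2)$ correct (it is the half-angle identity $(1-\cos u)/\sin u=\tan(u/2)$), and the passage from ``$\liminf_{s\to 0^{+}}\theta(t,s)\geq\pi/2$ for each fixed $t$'' to the joint $\limsup$ being $\geq\pi/2$ legitimate, since the $\limsup$ dominates the values along any sequence $(t_{k},s_{k})$ with $\theta(t_{k},s_{k})>\pi/2-1/k$. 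Two minor caveats: the convexity of $\bar{B}(x,r+\varepsilon)$ is itself a standard fact requiring the same care as Lemma~\ref{lem:basicCAT(k)}(ii) (one must first check the geodesic between two points of the ball stays in $B(x,D_{\kappa}/2)$), not an immediate corollary of it; and in (ii) you tacitly assume $d(p,y)<D_{\kappa}$ so that $\llbracket p,y\rrbracket$ and hence the angle exist --- the statement does not bound $\diam(C)$, but this is a hypothesis issue of the quoted proposition (harmless in this paper, where $\diam(C)<D_{\kappa}/2$ throughout), not a gap in your reasoning.
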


\section{Existence Theorems}

In this section, we establish our first main result -- the existence of common fixed points. Before we enter the main part, let us give some notes on the partial ordering first.

\subsection{Some Introductory Notes}

Before we establish the existence of a fixed point of a semigroup $\Gamma$, we need to make an additional assumption on the partial ordering $\preceq$. In particular, we want this partial ordering $\preceq$ to be \emph{compatible} with the $\CAT$ structure of $X$ in the following sense:
\begin{enumerate}[label=(A\arabic*)]
\item For each $u \in X$, the $\preceq$-interval
\[
[u,\rightarrow) := \{z \in X \;|\; u \preceq z \}
\]
is closed.
\item If $a,b,c,d \in X$ satisfy $a \preceq b$ and $c \preceq d$, then $(1-\lambda)a \oplus \lambda c \preceq (1-\lambda)b \oplus \lambda d$ for any $\lambda \in [0,1]$.
\end{enumerate}
Note that the second assumption implies that $\preceq$-intervals are convex. Moreover, we need the following note in our further investigations.
\begin{lem}
If $a \preceq b$ and $0 \leq \lambda \leq \eta \leq 1$, then
\[
(1-\lambda)a \oplus \lambda b \preceq (1-\eta)a \oplus \eta b.
\]
\end{lem}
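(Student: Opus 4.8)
The plan is to reduce everything to assumption (A2) together with reflexivity of $\preceq$; the obstacle is that (A2) forces the \emph{same} coefficient on both sides of the inequality, whereas the two points $(1-\lambda)a\oplus\lambda b$ and $(1-\eta)a\oplus\eta b$ sit at different parameters along $\llbracket a,b\rrbracket$. The remedy is to recognise $(1-\eta)a\oplus\eta b$ as a point on the sub-geodesic joining $(1-\lambda)a\oplus\lambda b$ to $b$, and to feed that description into (A2).

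First I would dispose of the trivial case $\lambda=1$: then $\eta=1$ as well, both sides equal $b$, and the claim is just reflexivity; so assume $\lambda<1$. Let $c:[0,T]\to X$ be the geodesic joining $a$ and $b$, where $T=d(a,b)$, so that $(1-t)a\oplus tb=c(tT)$, and set $p:=c(\lambda T)$ and $q:=c(\eta T)$.

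Next I would record the auxiliary relation $p\preceq b$. Applying (A2) with coefficient $\lambda$ to the comparable pairs $a\preceq b$ and $b\preceq b$ gives
\[
p=(1-\lambda)a\oplus\lambda b \preceq (1-\lambda)b\oplus\lambda b = b .
\]
Then, since $X$ is $D_{\kappa}$-uniquely geodesic, the restriction $c|_{[\lambda T,T]}$ is the geodesic from $p$ to $b$ (it has the correct endpoints and length $(1-\lambda)T$), so $(1-\mu)p\oplus\mu b=c\big(\lambda T+\mu(1-\lambda)T\big)$ for $\mu\in[0,1]$; choosing $\mu:=\frac{\eta-\lambda}{1-\lambda}\in[0,1]$ yields $(1-\mu)p\oplus\mu b=c(\eta T)=q$.

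Finally I would apply (A2) once more, with coefficient $\mu$, to the comparable pairs $p\preceq p$ and $p\preceq b$:
\[
p=(1-\mu)p\oplus\mu p \preceq (1-\mu)p\oplus\mu b = q ,
\]
which is the assertion. The hard part is really only the middle step — justifying that a sub-segment of a geodesic is again a geodesic with the expected affine parametrisation, so that $q$ admits the representation $(1-\mu)p\oplus\mu b$; this is routine in a $D_{\kappa}$-uniquely geodesic space, since uniqueness pins down the curve once endpoints and length agree, but it does rely on the standing assumption $d(a,b)<D_{\kappa}$ that makes the $\oplus$-combinations meaningful in the first place. Everything else is bookkeeping with (A2) and reflexivity of $\preceq$.
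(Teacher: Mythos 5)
Your proof is correct and follows essentially the same route as the paper's: establish $(1-\lambda)a\oplus\lambda b\preceq b$ via (A2), observe that $(1-\eta)a\oplus\eta b$ lies on the geodesic segment from $(1-\lambda)a\oplus\lambda b$ to $b$, and apply (A2) once more. You have merely spelled out the reparametrisation and the choice $\mu=\tfrac{\eta-\lambda}{1-\lambda}$ that the paper leaves implicit.
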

\begin{proof}
Notice that $(1-\lambda)a \oplus \lambda b \preceq b$. The conclusion simply follows from the fact that $(1-\eta) a \oplus \eta b \in \llbracket (1-\lambda)a \oplus \lambda b,b \rrbracket$.
\end{proof}

If $X$ is a normed linear space, the compatibility with the $\CAT$ structure is the same with compatility with the norm-topology and the linear structure. In particular, suppose that $E$ is a normed linear space. Recall that $K \subset E$ is called a \emph{cone} in $E$ if $\alpha x \in K$ for all $\alpha \geq 0$ whenever $x \in E$. Moreover, a cone $K$ is called pointed if $K \cap (-K) = \{0\}$. When $K$ is a closed convex pointed cone in $E$, we subsequently have a partial ordering $\sqsubseteq_{K}$ which is given by
\[
a \sqsubseteq_{K} b \iff b-a \in K,
\]
for $a,b \in E$. One can simply notice that $\sqsubseteq_{K}$ is compatible with the norm-topology and the linear structure in the sense given above.

It seems that the compatibility of $\preceq$ on a general $\CAT(\kappa)$ space is less obvious to be achieved. However, the $\CAT(\kappa)$ spaces which appears practical is often geometrically embedded or isometrically contained in some appropriate linear spaces, which makes the situation less complicated.

As mentioned in the introduction, Esp\'{i}nola and Wi\'{s}nicki \cite{MR3782584} recently gave a general mechanism for an order-preserving mapping in a Hausdorff topological space to have a fixed point. Their results unify several existence theorems for order-preserving mappings in the literatures assuming similar compatibility including \cite{MR3368216,MR3462097,MR3778148}. The results involving a Lipschitzian semigroup from \cite{MR3872318,MR3755622} are also similarly unified. The key ingredient in such unification is the compactness (in some topology) of the order intervals. In a reflexive normed linear space, every closed convex subset is compact in the weak topology. However, the question of whether or not there is a topology which generates $\Delta$-convergence in $\CAT$ spaces are still open (see also \cite{MR3003694,MR3241330}). It is therefore safe now to consider similar existence result in the setting of $\CAT$ spaces (or more generally the hyperbolic metric space), as was initiated in \cite{MR3830164}.

\subsection{An Existence theorem}

Throughout the rest of this paper, always assume that $(X,d)$ is a complete $\CAT(\kappa)$ space ($\kappa \in \R$) endowed with a partial ordering $\preceq$ which is compatible with the $\CAT$ structure. Assume that $C \subset X$ is nonempty, closed, convex, and bounded with $\diam(C) < D_{\kappa}/2$. Finally, assume that $\Gamma := \{T_{t}\}_{t \in J}$ is a $\preceq$-nonexpansive semigroup on $C$.

In view of Lemma \ref{lem:basicCAT(k)} and the boundedness of $C$, we can always assume that $\kappa > 0$ so that the $\kappa$-spherical law of cosines (Proposition \ref{prop:COSINE}) is applicable. The following theorem is the main existence result of this section.

\begin{thm}
The following statements are true:
\begin{enumerate}[label=(\roman*)]
\item\label{conclusion1} If there is a point $x^{0} \in X$ such that $x^{0} \preceq T_{t}x^{0}$ for all $t \in J$, then there is $w \in \Fix(\Gamma)$ such that $x^{0} \preceq w$.
\item\label{conclusion2} If $z_{1},z_{2} \in \Fix(\Gamma)$ are $\preceq$-comparable, then $\llbracket z_{1},z_{2} \rrbracket \subset \Fix(\Gamma)$.
\end{enumerate}
\end{thm}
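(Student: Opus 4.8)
The plan is to handle the two parts separately, with part \ref{conclusion1} being the substantial one. For part \ref{conclusion1}, I would first observe that the order-preservation property (S4) together with the semigroup law (S2) propagates the hypothesis $x^{0} \preceq T_{t}x^{0}$ upward: applying $T_{s}$ gives $T_{s}x^{0} \preceq T_{s}T_{t}x^{0} = T_{t}T_{s}x^{0}$, so the orbit is ``$\preceq$-increasing'' along the semigroup in the appropriate sense, and in particular any two iterates of the form $T_{t}x^{0}$, $T_{s}x^{0}$ are built from $\preceq$-comparable data. The key is to produce an approximate fixed point sequence that stays inside the order interval $[x^{0},\rightarrow)$, which is closed by (A1) and convex by (A2). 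I would use a Krasnosel'ski\u\i-type averaging: fix a suitable $t>0$ (or a countable cofinal set of times), set $y^{k+1} := \tfrac12 y^{k} \oplus \tfrac12 T_{t}y^{k}$ starting from $y^{0} = x^{0}$; the preceding Lemma and (A2) keep $y^{k} \preceq T_{t}y^{k}$ and $x^{0} \preceq y^{k}$ inductively, so comparability is never lost and the $\preceq$-nonexpansiveness (S5) with $L=1$ applies along the iterates. Then Lemma \ref{lem:basicCAT(k)}(iii), which is exactly the quasilinearization inequality valid because $\diam(C) < D_{\kappa}/2$, forces $d(y^{k}, T_{t}y^{k}) \to 0$ in the standard Ishikawa/Krasnosel'ski\u\i\ manner.

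Next I would extract a $\Delta$-convergent subsequence via Proposition \ref{prop:Deltalimit-Conv}(ii) (its hypothesis $\inf \tau < D_{\kappa}/2$ holds since everything lives in $C$), with $\Delta$-limit $w$. By Proposition \ref{prop:Deltalimit-Conv}(iii), $w$ lies in each $\cl\conv\{y^{n},y^{n+1},\dots\}$, hence in $[x^{0},\rightarrow)$ by (A1)+(A2), giving $x^{0} \preceq w$. To show $w \in \Fix(T_{t})$ I would run the usual ``demiclosedness'' argument adapted to $\CAT(\kappa)$: if $T_{t}w \neq w$, compare $\tau(w;(y^{k}))$ and $\tau(T_{t}w;(y^{k}))$ using $d(y^{k},T_{t}w) \leq d(y^{k},T_{t}y^{k}) + d(T_{t}y^{k},T_{t}w) \leq d(y^{k},T_{t}y^{k}) + d(y^{k},w)$ — here I need $y^{k}$ and $w$ comparable, which holds since both are in $[x^{0},\rightarrow)$ and, more carefully, one needs $y^k \preceq w$ or $w \preceq y^k$; this is the delicate point and is where I'd lean on convexity of the order interval and the structure of the iteration to guarantee comparability. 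This yields $\tau(T_{t}w) \leq \tau(w)$, contradicting that $A(y^{k}) = \{w\}$ is the unique minimizer unless $T_{t}w = w$. Finally, to pass from a single $T_{t}$ to the whole semigroup $\Gamma$, I would either intersect over a countable dense set of times and invoke continuity (S3) — the fixed point sets $\Fix(T_{t})$ are nested/compatible enough that a common fixed point survives — or directly build the iteration simultaneously respecting a cofinal sequence of times; the continuity axiom (S3) then upgrades a fixed point of all $T_{t}$ with $t$ in a dense subsemigroup to a fixed point of every $T_{t}$.

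For part \ref{conclusion2}, given $z_{1},z_{2} \in \Fix(\Gamma)$ that are $\preceq$-comparable, say $z_{1} \preceq z_{2}$, I would take any $u = (1-\lambda)z_{1}\oplus\lambda z_{2} \in \llbracket z_{1},z_{2}\rrbracket$. By (A2), $z_{1} = (1-\lambda)z_{1}\oplus\lambda z_{1} \preceq u \preceq (1-\lambda)z_{2}\oplus\lambda z_{2} = z_{2}$, so $u$ is comparable to both $z_{1}$ and $z_{2}$; hence (S5) gives $d(T_{t}u, z_{i}) = d(T_{t}u, T_{t}z_{i}) \leq d(u,z_{i})$ for $i=1,2$. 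Since $d(z_{1},z_{2}) = d(z_{1},u)+d(u,z_{2}) \geq d(z_{1},T_{t}u)+d(T_{t}u,z_{2}) \geq d(z_{1},z_{2})$, equality holds throughout, so $T_{t}u$ lies on the geodesic $\llbracket z_{1},z_{2}\rrbracket$ at the same parameter, i.e. $T_{t}u = u$ by uniqueness of geodesics in the $D_{\kappa}$-uniquely geodesic space $X$ (valid since $\diam(C) < D_{\kappa}/2 < D_{\kappa}$). Thus $u \in \Fix(\Gamma)$.

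The main obstacle I anticipate is the comparability bookkeeping in part \ref{conclusion1}: the $\preceq$-nonexpansiveness (S5) only applies to comparable pairs, so every step of the approximate-fixed-point construction and of the demiclosedness argument must be accompanied by a proof that the relevant pairs are comparable, which is not automatic and relies essentially on (A1), (A2), and the monotone structure of the chosen iteration.
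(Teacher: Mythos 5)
Your part (ii) is essentially the paper's own proof and is correct. Part (i), however, takes a genuinely different route --- a Krasnosel'ski\u\i{} iteration for a single $T_{t}$ followed by an attempt to glue the resulting fixed points over all $t$ --- and this route has two concrete gaps. First, the claim that Lemma \ref{lem:basicCAT(k)}(iii) ``forces $d(y^{k},T_{t}y^{k})\to 0$ in the standard manner'' is circular as stated: the standard argument anchors the strong-convexity inequality at a point $w\in\Fix(T_{t})$ comparable with the iterates (so that $d(w,T_{t}y^{k})\leq d(w,y^{k})$ and $d(w,y^{k})$ is nonincreasing), and the existence of such a $w$ is exactly what is being proved. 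An Ishikawa-type, fixed-point-free proof of asymptotic regularity exists for genuinely nonexpansive maps on bounded convex sets, but you would have to verify that it survives when nonexpansivity holds only on comparable pairs; you do not address this. Second, and more seriously, producing some $w_{t}\in\Fix(T_{t})$ for each $t$ separately does not yield a point of $\Fix(\Gamma)$: the $\Delta$-limits of the iterations for different $t$ need not coincide, the sets $\Fix(T_{t})$ are not nested in general, and ``intersecting over a countable dense set of times'' presupposes precisely the common-fixed-point statement you are trying to establish. The paper's own Section 4 makes clear that the Krasnosel'ski\u\i{} scheme reaches a \emph{common} fixed point only under the strong extra hypothesis \eqref{eqn:cdn:sjk} on the time sequence, which is why the existence theorem is not proved that way.

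The paper avoids both problems by working with the orbit $(T_{t}x^{0})_{t\in J}$ itself rather than an auxiliary iteration: it forms $C_{0}=C\cap\bigcap_{t\in J}[T_{t}x^{0},\rightarrow)$, shown nonempty by extracting a $\Delta$-convergent subsequence of the $\preceq$-nondecreasing orbit and applying Proposition \ref{prop:Deltalimit-Conv}(iii) together with (A1)--(A2); it then minimizes the asymptotic-radius functional $p(z)=\limsup_{t\to\infty}d^{2}(T_{t}x^{0},z)$ over the bounded closed convex set $C_{0}$, and applies Lemma \ref{lem:basicCAT(k)}(iii) at the midpoint $\tfrac12 z^{\ast}\oplus\tfrac12 T_{s}z^{\ast}$ to conclude $T_{s}z^{\ast}=z^{\ast}$ for \emph{every} $s\in J$ simultaneously. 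Comparability is available throughout because $T_{t}x^{0}\preceq z^{\ast}$ by the very definition of $C_{0}$, so $d(T_{s+t}x^{0},T_{s}z^{\ast})\leq d(T_{t}x^{0},z^{\ast})$ is legitimate. To repair your argument you would need either to adopt this asymptotic-center device or to supply the missing mechanism that turns fixed points of the individual maps $T_{t}$ into a common fixed point of the family.
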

\begin{proof}
\ref{conclusion1} The `only if' part is trivial to see. Let us proof the `if' part. Suppose that $x^{0} \in C$ satisfies $x^{0} \preceq T_{t}x^{0}$ for all $t \in J$.

Set $C_{0} := C \cap \left( \bigcap_{t \in J} [T_{t}x^{0},\rightarrow) \right)$. We claim that $C_{0}$ is nonempty, closed, and convex. The closedness and convexity of $C_{0}$ are obvious since all the sets in the intersection are closed and convex. So we only need to show that $C_{0}$ is nonempty. Indeed, suppose that $(t_{k})$ is a strictly increasing sequence in $J$ with $t_{k} \tendsto \infty$ as $k \tendsto \infty$. It follows that $(T_{t_{k}}x^{0})$ is a sequence in $C$ and is $\preceq$-nondecreasing. By the boundedness of $C$, there is a subsequence $(s_{k})$ of $(t_{k})$ in which $(T_{s_{k}}x^{0})$ is $\Delta$-convergent to some point $y \in C$. In view of Proposition \ref{prop:Deltalimit-Conv}, we have $y \in \bigcap_{n \in \N} \cl \conv \{T_{s_{n}}x^{0},T_{s_{n+1}}x^{0},\dots\}$. Since all the $\preceq$-intervals are closed and convex and $(T_{s_{n}}x^{0})$ is $\preceq$-nondecreasing, we further have 
\[
y \in C \cap \left( \bigcap_{n \in \N} \cl \conv \{T_{s_{n}}x^{0},T_{s_{n+1}}x^{0},\dots\} \right) \subset C \cap \left( \bigcap_{n \in \N} [T_{s_{n}}x^{0},\rightarrow) \right).
\]
Again, since $(T_{t}x^{0})_{t \in J}$ is $\preceq$-nondecreasing, we have the equality $\bigcap_{n \in \N} [T_{s_{n}}x^{0},\rightarrow) = \bigcap_{t \in J} [T_{t}x^{0},\rightarrow)$ and therefore the set $C_{0}$ is nonempty.


Let $p : C_{0} \to \R$ be a function defined by
\[
p(z) := \limsup_{t \tendsto \infty} d^{2}(T_{t}x^{0},z), \quad (^\forall z \in C_{0}).
\]
Note that $p$ is convex and continuous and $C_{0}$ is bounded, closed, and convex. Therefore, $p$ attains a minimizer $z^{\ast} \in C_{0}$. We may see that $T_{t}x^{0} \preceq z^{\ast}$ for all $t \geq0$. Let $s,t \in J$. By means of Lemma \ref{lem:basicCAT(k)} and the $\preceq$-nonexpansivity, we have
\begin{align*}
\lefteqn{d^{2}\left(T_{s+t}x^{0},\tfrac{1}{2}z^{\ast} \oplus \tfrac{1}{2}T_{s}z^{\ast}\right)} \\ 
&\qquad\qquad\leq \frac{1}{2}d^{2}(T_{s+t}x^{0},z^{\ast}) + \frac{1}{2}d^{2}(T_{s+t}x^{0},T_{s}z^{\ast}) - \frac{k}{8}d^{2}(z^{\ast},T_{s}z^{\ast}) \\
&\qquad\qquad\leq \frac{1}{2}d^{2}(T_{s+t}x^{0},z^{\ast}) + \frac{1}{2}d^{2}(T_{r+t}x^{0},z^{\ast}) - \frac{k}{8}d^{2}(z^{\ast},T_{s}z^{\ast}),
\end{align*}
where $k := 2\diam(C)\tan(D_{\kappa}/2 - \diam(C)) \in (0,2)$. Passing $t \tendsto \infty$ and since $z^{\ast}$ minimizes $p$, we obtain
\[
p(z^{\ast}) \leq p\left(\tfrac{1}{2}T_{r}z^{\ast} \oplus \tfrac{1}{2}T_{s}z^{\ast}\right) \leq p(z^{\ast}) - \frac{k}{8}d^{2}(z^{\ast},T_{s}z^{\ast}).
\]
This implies $z^{\ast} = T_{s}z^{\ast}$ for all $s \in J$. Moreover, since $(T_{s_{n}}x^{0})$ is $\preceq$-nondecreasing and we have $x^{0} \preceq z^{\ast}$.

\ref{conclusion2} Suppose that $z_{1},z_{2} \in \Fix(\Gamma)$ and that $z_{1} \preceq z_{2}$. We may also assume that $z_{1} \neq z_{2}$, since the conclusion is immediate otherwise. Let $c \in [0,1]$ and put $z := (1-c)z_{1} \oplus cz_{2}$. By the assumption on $\preceq$, we have $z_{1} \preceq z \preceq z_{2}$. For $t \in J$, we have
\begin{equation}\label{eqn:cvx1}
d(T_{t}z,z_{1}) = d(T_{t}z,T_{t}z_{1}) \leq d(z,z_{1}) = cd(z_{1},z_{2})
\end{equation}
and also
\begin{equation}\label{eqn:cvx2}
d(T_{t}z,z_{2}) = d(T_{t}z,T_{t}z_{2}) \leq d(z,z_{2}) = (1-c)d(z_{1},z_{2}).
\end{equation}
Using the triangle inequality, we get
\[
d(z_{1},z_{2}) \leq d(z_{1},T_{t}z) + d(T_{t}z,z_{2}) \leq cd(z_{1},z_{2}) + (1-c)d(z_{1},z_{2}) = d(z_{1},z_{2}).
\]
This means $d(z_{1},T_{t}z) + d(T_{t}z,z_{2}) = d(z_{1},z_{2})$, which implies that $T_{t}z \in \llbracket z_{1},z_{2} \rrbracket$. So $T_{t}z = (1-c')z_{1} \oplus c'z_{2}$ for some $c' \in [0,1]$. Moreover, we have $d(T_{t}z,z_{1}) = c'd(z_{1},z_{2})$ and $d(T_{t}z,z_{2}) = (1-c')d(z_{1},z_{2})$ for some $c' \in [0,1]$. Together with \eqref{eqn:cvx1} and \eqref{eqn:cvx2}, we obtain $c' \leq c$ and $1-c' \leq 1-c$ which yeilds $c' = c$. It follows that $T_{t}z = z$ for every $t \in J$. Since $c \in [0,1]$ is taken arbitrarily, we conclude that $\llbracket z_{1},z_{2} \rrbracket \subset \Fix(\Gamma)$.
\end{proof}

We immediately have the following consequence. Note that this consequence is also new in the setting of a $\CAT(\kappa)$ space.
\begin{cor}
Suppose that $\Gamma$ is a nonexpansive semigroup. Then $\Fix(\Gamma)$ is nonempty, closed, and convex.
\end{cor}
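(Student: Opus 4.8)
The plan is to deduce the corollary directly from the preceding Theorem. First I would observe that a nonexpansive semigroup $\Gamma$ is in particular a $\preceq$-nonexpansive semigroup (take $L=1$ in (S5)), so the Theorem applies; but more importantly, when the Lipschitz condition \eqref{eqn:Lip} holds for \emph{all} pairs (not just comparable ones), the partial ordering becomes irrelevant. Concretely, I would first establish that $\Fix(\Gamma) \neq \emptyset$: pick any $x^{0} \in C$ and note that, because $C$ is bounded, closed, and convex with $\diam(C) < D_{\kappa}/2$, the argument in part~\ref{conclusion1} of the Theorem goes through verbatim once we drop the $\preceq$-intervals --- that is, take $C_{0} := C$, define $p(z) := \limsup_{t\to\infty} d^{2}(T_{t}x^{0},z)$, which is convex and continuous, hence attains a minimizer $z^{\ast}$ on the bounded closed convex set $C$, and then the same computation using Lemma~\ref{lem:basicCAT(k)}(iii) and nonexpansivity (now valid for all pairs) forces $T_{s}z^{\ast} = z^{\ast}$ for every $s \in J$. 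Alternatively, and more cleanly, I would simply invoke part~\ref{conclusion1} of the Theorem with the trivial partial ordering (equality), which is vacuously compatible with the $\CAT$ structure; then the hypothesis ``$x^{0} \preceq T_{t}x^{0}$'' is just ``$T_{t}x^{0} = x^{0}$'', which is not what we want, so the cleanest route is really the direct re-run of the existence argument with $C_{0}=C$.

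Next I would prove convexity of $\Fix(\Gamma)$ by re-running part~\ref{conclusion2} of the Theorem: given $z_{1},z_{2}\in\Fix(\Gamma)$, the proof there never uses that $z_{1},z_{2}$ are comparable beyond deriving $z_{1}\preceq z\preceq z_{2}$, which was only invoked to apply $\preceq$-nonexpansivity in \eqref{eqn:cvx1}--\eqref{eqn:cvx2}. Since now nonexpansivity holds for \emph{every} pair, the inequalities $d(T_{t}z,z_{i}) \leq d(z,z_{i})$ hold unconditionally, and the triangle-inequality argument showing $T_{t}z \in \llbracket z_{1},z_{2}\rrbracket$ with the same barycentric coordinate $c$ carries through unchanged; hence $\llbracket z_{1},z_{2}\rrbracket \subset \Fix(\Gamma)$, i.e., $\Fix(\Gamma)$ is convex.

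Finally, closedness of $\Fix(\Gamma)$ is a short standalone argument: each $T_{t}$ is globally nonexpansive, hence continuous, so $\Fix(T_{t}) = \{x \in C : d(T_{t}x,x)=0\}$ is closed, and $\Fix(\Gamma) = \bigcap_{t\in J}\Fix(T_{t})$ is an intersection of closed sets, therefore closed. I expect the only real point requiring care --- the ``main obstacle,'' though it is minor --- is making sure the existence half genuinely does not need the order structure: one must check that in the displayed inequality chain of part~\ref{conclusion1} the terms $d^{2}(T_{s+t}x^{0},T_{s}z^{\ast}) \leq d^{2}(T_{t}x^{0},z^{\ast})$ (using $T_{s+t} = T_{s}\circ T_{t}$ and nonexpansivity of $T_{s}$) no longer rely on $T_{t}x^{0} \preceq z^{\ast}$, which is exactly where the unconditional Lipschitz bound is used. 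Everything else is bookkeeping, so I would keep the proof to a few lines citing the Theorem and noting these two simplifications.
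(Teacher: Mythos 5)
Your proof is correct and matches the paper's intent: the paper offers no written proof, presenting the corollary as an immediate consequence of the preceding theorem, and your specialization --- re-running part (i) of the theorem with $C_{0}=C$ and part (ii) with nonexpansivity now valid for all pairs --- is exactly the reading that makes it immediate. You also rightly flag the two points the paper glosses over: invoking the theorem with the trivial (equality) ordering would be circular, and closedness of $\Fix(\Gamma)$, which the theorem never addresses, requires the separate but easy observation that each $\Fix(T_{t})$ is closed because $T_{t}$ is continuous.
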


\section{Explicit Approximation Scheme}

After we have proved the existence of a fixed point for the semigroup $\Gamma$ in the previous section, we hereby propose an algorithm to approximate such a solution. The algorithm presented in this section is a modification of the Krasnosel'ski\u\i{} approximation schemes.

Let us now give the formal definition of the Krasnosel'ski\u\i{} approximation scheme associated with $\Gamma$ as follows: Let $\lambda \in (0,1)$ and $(t_{k})$ be a strictly increasing positive real sequence such that $t \tendsto \infty$ as $k \tendsto \infty$. Suppose that $x^{0} \in X$ has the property $x^{0} \preceq T_{t}x^{0}$ for all $t \in J$, generate for each $k \in \N$ the successive point
\begin{equation}\label{eqn:KM}
x^{k+1} := (1-\lambda)x^{k} \oplus \lambda T_{t_{k}}x^{k}.
\end{equation}
For this section, always suppose that $(x^{k})$ is the sequence given by \eqref{eqn:KM} from a point $x^{0} \in X$. We shall also refer to this sequence as the \emph{Krasnosel'ski\u\i{} sequence generated from $x^{0}$}.

We shall decompose the proof for the convergence of $(x^{k})$ into a number of Lemmas as stated in the following.
\begin{lem}\label{lem:fact1}
The following assertions hold for each $k \in \N$.
\begin{enumerate}[label=(\roman*)]
\item $x^{k} \preceq x^{k+1}$.
\item $x^{k} \preceq T_{s}x^{k}$ for $s \in J$ with $s \geq t_{k}$.
\item $x^{k} \preceq T_{t_{k}}x^{k}$.
\end{enumerate}
\end{lem}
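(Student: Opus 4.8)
The plan is to establish the three assertions in the order (i), (iii), (ii), using induction on $k$ together with the order-compatibility assumptions (A1)--(A2) and the defining properties (S1)--(S5) of the semigroup.

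First I would prove (i) by induction on $k$. For the base case $k = 0$: by hypothesis $x^{0} \preceq T_{t_{0}}x^{0}$, so applying the preliminary Lemma (the one stating $(1-\lambda)a \oplus \lambda b \preceq (1-\eta)a \oplus \eta b$ when $a \preceq b$) with $a = x^{0}$, $b = T_{t_{0}}x^{0}$, $\lambda = 0$, $\eta = \lambda$, together with $x^{1} = (1-\lambda)x^{0} \oplus \lambda T_{t_{0}}x^{0}$, gives $x^{0} = (1-0)x^{0} \oplus 0\cdot T_{t_{0}}x^{0} \preceq x^{1}$. For the inductive step, assume $x^{k-1} \preceq x^{k}$; I want $x^{k} \preceq x^{k+1}$. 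The natural route is: since $x^{k-1} \preceq x^{k}$ and $T_{t_{k}}$ preserves $\preceq$ by (S4) applied after noting that $T_{t_{k}}$ is (the composition realizing) the semigroup element, one wants $T_{t_{k}}x^{k-1} \preceq T_{t_{k}}x^{k}$; combined with the inductive data one assembles $x^{k} \preceq T_{t_{k}}x^{k}$ (this is precisely assertion (iii) at level $k$), and then the same preliminary Lemma with $a = x^{k}$, $b = T_{t_{k}}x^{k}$, $\lambda = 0$, $\eta = \lambda$ yields $x^{k} \preceq x^{k+1}$. So in practice (i) and (iii) are proved by a single simultaneous induction.

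For (iii), once we know $x^{k} \preceq x^{k+1}$ it remains to produce $x^{k} \preceq T_{t_{k}}x^{k}$; but this is most cleanly derived as the $s = t_{k}$ instance of (ii), so I would prove (ii) and then read off (iii). For (ii): fix $s \in J$ with $s \geq t_{k}$ and write $s = t_{k} + r$ with $r \in J \cup \{0\}$ (here one uses that $J$ is a subsemigroup of $[0,\infty)$; if $s = t_{k}$ take $r = 0$). By (S2), $T_{s}x^{k} = T_{t_{k}}(T_{r}x^{k})$. Now I need the key inequality $x^{k} \preceq T_{r}x^{k}$ for every $r \in J$, at which point applying the order-preserving $T_{t_{k}}$ and using (iii)-type reasoning gives $T_{t_{k}}x^{k} \preceq T_{t_{k}}(T_{r}x^{k}) = T_{s}x^{k}$, and chaining with $x^{k} \preceq T_{t_{k}}x^{k}$ yields $x^{k} \preceq T_{s}x^{k}$. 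So the real content is: \emph{for all $k$ and all $r \in J$, $x^{k} \preceq T_{r}x^{k}$}. I would prove this by induction on $k$. Base case $k=0$ is the standing hypothesis on $x^{0}$. Inductive step: assuming $x^{k}\preceq T_{r}x^{k}$ for all $r\in J$, I must show $x^{k+1}\preceq T_{r}x^{k+1}$. Write $x^{k+1}=(1-\lambda)x^{k}\oplus\lambda T_{t_{k}}x^{k}$. Applying $T_{r}$ and using that $T_{r}$ commutes with the geodesic midpoint construction only up to the order relation — here one should instead argue: $x^{k}\preceq T_{r}x^{k}$ and $T_{t_{k}}x^{k}\preceq T_{t_{k}}(T_{r}x^{k})=T_{r}(T_{t_{k}}x^{k})$ (using (S2) and (S4)), so by (A2) with the pairs $(x^{k},T_{r}x^{k})$ and $(T_{t_{k}}x^{k},T_{r}T_{t_{k}}x^{k})$ and parameter $\lambda$ we get $(1-\lambda)x^{k}\oplus\lambda T_{t_{k}}x^{k}\preceq (1-\lambda)T_{r}x^{k}\oplus\lambda T_{r}T_{t_{k}}x^{k}$; if in addition $T_{r}$ maps geodesics into comparable-ordered position one still needs $(1-\lambda)T_{r}x^{k}\oplus\lambda T_{r}T_{t_{k}}x^{k}\preceq T_{r}x^{k+1}$, which does not obviously hold since $T_{r}$ need not be affine. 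The cleaner fix is to instead deduce $x^{k+1}\preceq T_{r}x^{k+1}$ from $x^{k}\preceq x^{k+1}$ (assertion (i)) via $x^{k}\preceq x^{k+1}\Rightarrow T_{r}x^{k}\preceq T_{r}x^{k+1}$, and separately $x^{k+1}\preceq T_{?}\cdots$; one should organize the simultaneous induction so that "(i) at level $k$" $+$ "the hypothesis at level $k$" $\Rightarrow$ "the hypothesis at level $k+1$", using that $x^{k}\preceq T_{t_k}x^{k}$ and the geodesic-monotonicity Lemma to sandwich $x^{k+1}$ between $x^{k}$ and $T_{t_k}x^{k}$, then pushing through an appropriate $T_r$.

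\textbf{Main obstacle.} The main obstacle is the step establishing $x^{k} \preceq T_{r}x^{k}$ for \emph{all} $r \in J$ (not merely $r = t_{k}$): the nonexpansive maps $T_{r}$ are not affine on geodesics, so one cannot directly relate $T_{r}((1-\lambda)a\oplus\lambda b)$ to $(1-\lambda)T_{r}a\oplus\lambda T_{r}b$. The resolution is to avoid pushing $T_r$ through the midpoint operation and instead carry $x^{k}\preceq T_{r}x^{k}$ along the induction by combining (A2) (to handle the geodesic combination defining $x^{k+1}$) with (S2) and (S4) (to handle the action of the semigroup), together with the already-established monotonicity $x^{k}\preceq x^{k+1}$. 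Everything else — the base cases and the passage from (ii) to (iii) — is routine bookkeeping with the semigroup law and the order-interval convexity noted after (A2).
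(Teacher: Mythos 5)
Your plan for parts (i) and (iii) is fine, but the treatment of (ii) has a genuine gap, and you have in effect flagged it yourself without closing it. You reduce (ii) to the claim that $x^{k} \preceq T_{r}x^{k}$ for \emph{every} $r \in J$. That claim is strictly stronger than assertion (ii), which only asserts comparability for $s \geq t_{k}$, and the restriction is not cosmetic: for $k \geq 1$ the only order information available about $x^{k}$ is the sandwich $x^{k-1} \preceq x^{k} \preceq T_{t_{k-1}}x^{k-1}$, which only produces comparisons of $x^{k}$ with $T_{s}(\cdot)$ for $s$ built up from $t_{k-1}$ and beyond; there is no mechanism forcing $x^{k} \preceq T_{r}x^{k}$ for small $r$. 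Your own inductive attempt to prove the strengthened claim then stalls exactly where it must: $T_{r}$ is not affine on geodesics, so $T_{r}\bigl((1-\lambda)x^{k}\oplus\lambda T_{t_{k}}x^{k}\bigr)$ cannot be compared with $(1-\lambda)T_{r}x^{k}\oplus\lambda T_{r}T_{t_{k}}x^{k}$, and the ``cleaner fix'' you sketch is left unexecuted (the step $x^{k+1}\preceq T_{?}\cdots$ is never supplied). As written, the proposal does not contain a proof of (ii).

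The paper's argument avoids this entirely by never applying $T_{s}$ to the geodesic combination and never needing comparability for small $r$. It carries through the induction the single chain
\[
x^{k} \;\preceq\; x^{k+1} \;\preceq\; T_{t_{k}}x^{k} \;\preceq\; T_{s}x^{k} \;\preceq\; T_{s}x^{k+1}, \qquad s \in J,\ s \geq t_{k},
\]
where the first two links come from the preliminary geodesic lemma applied to $x^{k}\preceq T_{t_{k}}x^{k}$, the third from writing $T_{s}=T_{t_{k}}\circ T_{s-t_{k}}$ together with the order information inherited from the previous step, and the last from (S4) applied to $x^{k}\preceq x^{k+1}$. The decisive device you are missing is that final link: one compares $x^{k+1}$ with $T_{s}x^{k}$ --- the semigroup acting on the \emph{previous} iterate --- and only then upgrades to $T_{s}x^{k+1}$ by order-preservation, so the question of how $T_{s}$ interacts with the $\oplus$ operation never arises. (Even the paper's own induction is terse here --- propagating the link $T_{t_{k}}x^{k}\preceq T_{s}x^{k}$ to the next level quietly uses comparability of $x^{k}$ with $T_{s-t_{k}}x^{k}$ --- but that is a bookkeeping issue in the paper, whereas your route requires a statement that is both stronger than the lemma and not expected to be true.)
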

\begin{proof}
Following from $x^{0} \preceq T_{t_{0}}x^{0}$, we get
\[
x^{0} \preceq (1-\lambda)x^{0} \oplus \lambda T_{t_{0}}x^{0} \preceq T_{t_{0}}x^{0} \preceq T_{s}x^{0}
\]
for all $s \in J$ with $s \geq t_{0}$. This shows that $x^{0} \preceq x^{1} \preceq T_{s}x^{0}$ for all $s \in J$ with $s \geq t_{0}$. In particular, we have $x^{0} \preceq T_{t_{1}}x^{0}$. The conclusion follows by the induction process.
\end{proof}

\begin{lem}\label{lem:fact2}
If $w \in \Fix(\Gamma)$ satisfies $x^{0} \preceq w$, then the limit $\lim_{k \tendsto \infty} d(w,x^{k})$ exists
\end{lem}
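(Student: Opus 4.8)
The plan is to show that the sequence $\big(d(w,x^{k})\big)_{k}$ is nonincreasing, which—being bounded below by $0$—forces convergence. First I would fix $w \in \Fix(\Gamma)$ with $x^{0} \preceq w$. The key observation is that by Lemma~\ref{lem:fact1}\,(i) we have $x^{k} \preceq x^{k+1}$, and it is routine (using the compatibility of $\preceq$ with the $\CAT$ structure, together with $x^{0} \preceq w$ propagated through the order-preserving maps $T_{t}$) to see that $x^{k} \preceq w$ for every $k$. In particular $x^{k}$ and $w$ are $\preceq$-comparable, and likewise $T_{t_{k}}x^{k}$ and $w = T_{t_{k}}w$ are $\preceq$-comparable.

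Next I would estimate $d(w, x^{k+1})$. Since $x^{k+1} = (1-\lambda)x^{k} \oplus \lambda T_{t_{k}}x^{k}$ lies on the geodesic segment $\llbracket x^{k}, T_{t_{k}}x^{k} \rrbracket \subset C$, convexity of the distance function $d(\cdot,w)$ on $B(w, D_{\kappa}/2)$ (Lemma~\ref{lem:basicCAT(k)}\,(ii), applicable since $\diam(C) < D_{\kappa}/2$ and all points lie in $C$) gives
\[
d(w, x^{k+1}) \leq (1-\lambda)\, d(w, x^{k}) + \lambda\, d\big(w, T_{t_{k}}x^{k}\big).
\]
Now I apply $\preceq$-nonexpansivity: because $w \in \Fix(\Gamma)$, $T_{t_{k}}w = w$, and since $x^{k}$ and $w$ are $\preceq$-comparable, $d\big(w, T_{t_{k}}x^{k}\big) = d\big(T_{t_{k}}w, T_{t_{k}}x^{k}\big) \leq d(w, x^{k})$. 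Substituting back yields $d(w, x^{k+1}) \leq (1-\lambda)\, d(w,x^{k}) + \lambda\, d(w, x^{k}) = d(w, x^{k})$.

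Thus $\big(d(w, x^{k})\big)_{k}$ is a nonincreasing sequence of nonnegative reals, hence convergent, which is exactly the claim. The only point requiring a little care is the justification that $x^{k} \preceq w$ for all $k$: this follows by induction, since $x^{0} \preceq w$ by hypothesis, and if $x^{k} \preceq w$ then $T_{t_{k}}x^{k} \preceq T_{t_{k}}w = w$ by order-preservation, so $x^{k+1} = (1-\lambda)x^{k} \oplus \lambda T_{t_{k}}x^{k} \preceq (1-\lambda)w \oplus \lambda w = w$ by assumption (A2). I do not anticipate a genuine obstacle here; the mild subtlety is only in making sure every invoked inequality (distance convexity, the quadratic inequality of Lemma~\ref{lem:basicCAT(k)}) is applied within the ball of radius $D_{\kappa}/2$ where it is valid, which is guaranteed by the standing assumption $\diam(C) < D_{\kappa}/2$.
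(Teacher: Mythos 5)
Your proposal is correct and follows essentially the same route as the paper: show $x^{k}\preceq w$ (hence $T_{t_{k}}x^{k}\preceq T_{t_{k}}w=w$), then combine convexity of $d(\cdot,w)$ with the $\preceq$-nonexpansivity on comparable pairs to get $d(w,x^{k+1})\leq d(w,x^{k})$, and conclude by monotone convergence. Your explicit induction for $x^{k}\preceq w$ is in fact a slightly cleaner justification than the paper's one-line appeal to Lemma~\ref{lem:fact1}.
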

\begin{proof}
Since $T_{t}$ preserves $\preceq$ and $x^{0} \preceq w$, we have $T_{t}x^{0} \preceq T_{t}w = w$ for each $t \in J$. It follows from Lemma \ref{lem:fact1} that $x^{k} \preceq T_{t_{k}}x^{0} \preceq w$ for all $k \in \N$. Next, observe that the $\preceq$-nonexpansivity yields
\begin{align*}
d(w,x^{k+1}) &= d(w,(1-\lambda)x^{k} \oplus \lambda T_{t_{k}}x^{k}) \\
&\leq (1-\lambda)d(w,x^{k}) + \lambda(w,T_{t{k}}x^{k}) \\
&\leq d(w,x^{k}).
\end{align*}
Therefore, the sequence $\left(d(w,x^{k})\right)$ is nonincreasing and bounded from below. This shows that the desired limit exists.
\end{proof}

\begin{lem}\label{lem:fact3}
The following limits hold.
\begin{enumerate}[label=(\roman*)]
\item\label{cdn:fact2-1} $\lim_{k \tendsto \infty} d(x^{k},T_{t_{k}}x^{k}) = 0$.
\item\label{cdn:fact2-2} $\lim_{k \tendsto \infty} d(x^{k},x^{k+1}) = 0$.
\end{enumerate}
\end{lem}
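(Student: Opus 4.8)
The plan is to derive both limits from the monotonicity established in Lemma~\ref{lem:fact2}, exploiting the strong convexity inequality in Lemma~\ref{lem:basicCAT(k)}(iii). Fix $w \in \Fix(\Gamma)$ with $x^{0} \preceq w$; such a $w$ exists by the Existence Theorem, and by Lemma~\ref{lem:fact2} the limit $r := \lim_{k} d(w,x^{k})$ exists. Since $x^{k}$ and $T_{t_k}x^{k}$ are $\preceq$-comparable (indeed $x^{k} \preceq T_{t_k}x^{k} \preceq w$ by Lemma~\ref{lem:fact1} and order-preservation), and $x^{k+1} = (1-\lambda)x^{k}\oplus\lambda T_{t_k}x^{k}$, I would apply Lemma~\ref{lem:basicCAT(k)}(iii) with $p = w$, $x = x^{k}$, $y = T_{t_k}x^{k}$, $t = \lambda$ to get
\[
d^{2}(w,x^{k+1}) \leq (1-\lambda)d^{2}(w,x^{k}) + \lambda d^{2}(w,T_{t_k}x^{k}) - \frac{k}{2}\lambda(1-\lambda)d^{2}(x^{k},T_{t_k}x^{k}).
\]
Using $\preceq$-nonexpansivity, $d(w,T_{t_k}x^{k}) = d(T_{t_k}w,T_{t_k}x^{k}) \leq d(w,x^{k})$, so the first two terms are bounded by $d^{2}(w,x^{k})$, yielding
\[
\frac{k}{2}\lambda(1-\lambda)d^{2}(x^{k},T_{t_k}x^{k}) \leq d^{2}(w,x^{k}) - d^{2}(w,x^{k+1}).
\]
The right-hand side telescopes and converges to $r^{2} - r^{2} = 0$, and since $k,\lambda,(1-\lambda)$ are fixed positive constants, this forces $d(x^{k},T_{t_k}x^{k}) \to 0$, which is \ref{cdn:fact2-1}.

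For \ref{cdn:fact2-2}, I would simply note that $d(x^{k},x^{k+1}) = d(x^{k},(1-\lambda)x^{k}\oplus\lambda T_{t_k}x^{k}) = \lambda\, d(x^{k},T_{t_k}x^{k})$ because $x^{k+1}$ lies on the geodesic $\llbracket x^{k}, T_{t_k}x^{k}\rrbracket$ at parameter $\lambda$. Hence \ref{cdn:fact2-2} is immediate from \ref{cdn:fact2-1}.

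I do not anticipate a serious obstacle here; the argument is a standard Fej\'er-monotonicity-plus-strong-convexity computation. The only points requiring a little care are: first, checking that the hypotheses of Lemma~\ref{lem:basicCAT(k)}(iii) genuinely apply — i.e. that $w, x^{k}, T_{t_k}x^{k}$ all lie in the bounded closed convex set $C$ with $\diam(C) < D_{\kappa}/2$, which holds since $\Gamma$ maps $C$ into itself and $w \in C$; and second, confirming that $T_{t_k}$ is well-defined on $x^k$, i.e. $t_k \in J$, which is part of the standing hypotheses on $(t_k)$. One should also record explicitly that the constant $k = 2\diam(C)\tan(D_{\kappa}/2 - \diam(C))$ is strictly positive (as noted in the proof of the Existence Theorem), so that dividing by it is legitimate.
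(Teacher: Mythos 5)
Your proposal is correct and follows essentially the same route as the paper: both apply the strong convexity inequality of Lemma~\ref{lem:basicCAT(k)}(iii) at $p = w$ together with $\preceq$-nonexpansivity to isolate the term $\frac{k}{2}\lambda(1-\lambda)d^{2}(x^{k},T_{t_{k}}x^{k})$, then pass to the limit using the monotone convergence of $d(w,x^{k})$ from Lemma~\ref{lem:fact2}, and deduce \ref{cdn:fact2-2} from the identity $d(x^{k},x^{k+1}) = \lambda\, d(x^{k},T_{t_{k}}x^{k})$. Your added care about the positivity of the convexity constant and the applicability of Lemma~\ref{lem:basicCAT(k)}(iii) on $C$ is a welcome refinement of the paper's argument, not a deviation from it.
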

\begin{proof}
Let $w \in \Fix(\Gamma)$ satisfies $x^{0} \preceq w$.

\ref{cdn:fact2-1} Observe for each $k \in \N$ the following
\begin{align*}
d^{2}(w,x^{k+1}) &= d^{2}(w,(1-\lambda)x^{k} \oplus \lambda T_{t_{k}}x^{k}) \\
&\leq (1-\lambda)d^{2}(w,x^{k}) + \lambda d^{2}(w,T_{t_{k}}x^{k}) - \frac{k_{2}}{2}\lambda(1-\lambda)d^{2}(x^{k},T_{t_{k}}x^{k}) \\
&\leq d^{2}(w,x^{k}) - \frac{k_{2}}{2}\lambda(1-\lambda)d^{2}(x^{k},T_{t_{k}}x^{k}).
\end{align*}
Letting $k \tendsto \infty$ and put $r := \lim_{k \tendsto \infty} d(w,x^{k})$, we get
\[
r^{2} \leq r^{2} - \frac{k_{2}}{2}\lambda(1-\lambda) \limsup_{k \tendsto \infty} d^{2}(x^{k},T_{t_{k}}x^{k}).
\]
It follows that $\lim_{k \tendsto \infty} d(x^{k},T_{t_{k}}x^{k}) = 0$.

\ref{cdn:fact2-2} Since $d(x^{k},x^{k+1}) = d(x^{k},(1-\lambda)x^{k} \oplus \lambda T_{t_{k}}x^{k}) = \lambda d(x^{k},T_{t_{k}}x^{k})$, the conclusion follows from \ref{cdn:fact2-1}.
\end{proof}

From this point, we need to assume additional conditions on the construction of the sequence $(t_{k})$ in relation with the overall structure of the semigroup $J$. This condition is strong but it allows us to obtain the approximate fixed point sequence.

\begin{lem}\label{lem:fact4}
Assume that $s \in J$ has the following property: 
\begin{equation}\label{eqn:cdn:sjk}
\begin{array}{c}
\text{there exists a strictly increasing sequence $(j_{k})$ of positive integers such that}\\ \text{$t_{j_{k}} = s + t_{j_{k}}, \quad (^\forall k \in \N)$.}
\end{array}
\end{equation}
Then $(x^{j_{k}})$ is an approximate fixed point sequence of $T_{s}$, i.e., $\lim_{k \tendsto \infty} d(x^{j_{k}},T_{s}x^{j_{k}}) = 0$.
\end{lem}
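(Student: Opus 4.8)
The plan is to fix an $s\in J$ enjoying the stated property and to read \eqref{eqn:cdn:sjk} as supplying a strictly increasing sequence $(j_k)$ of positive integers along which consecutive nodes of $(t_k)$ differ by exactly $s$, i.e.\ $t_{j_k}=s+t_{j_k-1}$ for every $k$; by the semigroup law (S2) this factorises as $T_{t_{j_k}}=T_s\circ T_{t_{j_k-1}}$. I would then compare $x^{j_k}$ with $T_sx^{j_k}$ through the single intermediate point $T_{t_{j_k}}x^{j_k}$, which is already known to be asymptotically close to $x^{j_k}$:
\[
d(x^{j_k},T_sx^{j_k})\le d(x^{j_k},T_{t_{j_k}}x^{j_k})+d(T_{t_{j_k}}x^{j_k},T_sx^{j_k}).
\]
The first term tends to $0$ by Lemma~\ref{lem:fact3}\ref{cdn:fact2-1} (recall $j_k\tendsto\infty$). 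For the second term I would rewrite $T_{t_{j_k}}x^{j_k}=T_s\big(T_{t_{j_k-1}}x^{j_k}\big)$ and invoke the $\preceq$-nonexpansivity of $T_s$ to obtain
\[
d(T_{t_{j_k}}x^{j_k},T_sx^{j_k})=d\big(T_s(T_{t_{j_k-1}}x^{j_k}),T_sx^{j_k}\big)\le d\big(T_{t_{j_k-1}}x^{j_k},x^{j_k}\big),
\]
the last step being legitimate only once we know $T_{t_{j_k-1}}x^{j_k}$ and $x^{j_k}$ are $\preceq$-comparable.

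Establishing this comparability is the crux, and it is where the order structure enters. Lemma~\ref{lem:fact1}(iii) gives $x^{j_k-1}\preceq T_{t_{j_k-1}}x^{j_k-1}$, whence $x^{j_k}=(1-\lambda)x^{j_k-1}\oplus\lambda T_{t_{j_k-1}}x^{j_k-1}\preceq T_{t_{j_k-1}}x^{j_k-1}$, since $(1-\lambda)a\oplus\lambda b\preceq b$ whenever $a\preceq b$ (as noted just before the existence theorem, using (A2)). On the other hand Lemma~\ref{lem:fact1}(i) gives $x^{j_k-1}\preceq x^{j_k}$, so applying the order-preserving map $T_{t_{j_k-1}}$ (condition (S4)) yields $T_{t_{j_k-1}}x^{j_k-1}\preceq T_{t_{j_k-1}}x^{j_k}$. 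Transitivity of $\preceq$ now gives $x^{j_k}\preceq T_{t_{j_k-1}}x^{j_k}$, so the pair is indeed comparable and the displayed estimate is justified.

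It then remains to show $d\big(T_{t_{j_k-1}}x^{j_k},x^{j_k}\big)\tendsto 0$. Splitting at $T_{t_{j_k-1}}x^{j_k-1}$,
\[
d\big(T_{t_{j_k-1}}x^{j_k},x^{j_k}\big)\le d\big(T_{t_{j_k-1}}x^{j_k},T_{t_{j_k-1}}x^{j_k-1}\big)+d\big(T_{t_{j_k-1}}x^{j_k-1},x^{j_k}\big).
\]
The first summand is $\le d(x^{j_k},x^{j_k-1})$ by $\preceq$-nonexpansivity of $T_{t_{j_k-1}}$ on the comparable pair $x^{j_k-1}\preceq x^{j_k}$, and this tends to $0$ by Lemma~\ref{lem:fact3}\ref{cdn:fact2-2}. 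The second summand equals $(1-\lambda)\,d(x^{j_k-1},T_{t_{j_k-1}}x^{j_k-1})$, because $x^{j_k}$ is the point of $\llbracket x^{j_k-1},T_{t_{j_k-1}}x^{j_k-1}\rrbracket$ at parameter $\lambda$, and this tends to $0$ by Lemma~\ref{lem:fact3}\ref{cdn:fact2-1}. Assembling the pieces gives $\lim_{k\tendsto\infty}d(x^{j_k},T_sx^{j_k})=0$.

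The step I expect to demand the most care is the comparability bookkeeping in the second paragraph: because (S5) makes $T_s$ and $T_{t_{j_k-1}}$ nonexpansive only on $\preceq$-comparable pairs, every appeal to that inequality must first be justified, and each justification is obtained by chaining the monotonicity relations from Lemma~\ref{lem:fact1} together with the compatibility assumption (A2) and the small geodesic-monotonicity lemma. Once the correct comparable pairs have been identified, everything else is a routine combination of the asymptotic-regularity facts already recorded in Lemma~\ref{lem:fact3}.
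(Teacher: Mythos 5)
Your argument is sound and follows the same basic route as the paper's: insert the intermediate point $T_{t_{j_k}}x^{j_k}$, factor $T_{t_{j_k}}$ through $T_s$ via the semigroup law, apply $\preceq$-nonexpansivity of $T_s$ on a comparable pair, and finish with Lemma \ref{lem:fact3}. The differences are worth recording. First, condition \eqref{eqn:cdn:sjk} as printed ($t_{j_k}=s+t_{j_k}$) is a misprint; the paper's proof reads it as $t_{j_{k+1}}=s+t_{j_k}$ (consecutive terms of the \emph{subsequence} differ by $s$), whereas you read it as $t_{j_k}=s+t_{j_k-1}$ (consecutive terms of the full sequence $(t_k)$ differ by $s$ at the indices $j_k$). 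Under the paper's reading the residual term is $d(T_{t_{j_k}}x^{j_{k+1}},x^{j_{k+1}})$, which the paper dispatches with a bare appeal to Lemma \ref{lem:fact3}; that lemma does not apply verbatim there because the time index $j_k$ and the iterate index $j_{k+1}$ are mismatched, and closing that gap seems to need $j_{k+1}-j_k$ bounded, an assumption only introduced later in Lemma \ref{lem:fact5}. Under your reading the residual is $d(T_{t_{j_k-1}}x^{j_k},x^{j_k})$, and your splitting at $T_{t_{j_k-1}}x^{j_k-1}$ --- using the iteration formula \eqref{eqn:KM} to evaluate $d(T_{t_{j_k-1}}x^{j_k-1},x^{j_k})=(1-\lambda)\,d(x^{j_k-1},T_{t_{j_k-1}}x^{j_k-1})$ --- genuinely closes it. Your explicit verification that $x^{j_k}\preceq T_{t_{j_k-1}}x^{j_k}$ (via $x^{j_k}\preceq T_{t_{j_k-1}}x^{j_k-1}\preceq T_{t_{j_k-1}}x^{j_k}$) is also necessary and is only gestured at in the paper, since Lemma \ref{lem:fact1}(ii) covers $T_\sigma$ only for $\sigma\geq t_{j_k}$, not for $\sigma=t_{j_k-1}<t_{j_k}$. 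In short: same strategy, but your version is the more complete one, modulo the fact that you and the author have repaired the misprinted hypothesis in two different (inequivalent) ways.
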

\begin{proof}
Suppose that $k \in \N$ is sufficiently large so that $t_{j_{k}} > s$. In view of \ref{lem:fact1} and the $\preceq$-nonexpansivity of $T_{s}$, we have
\begin{align*}
d(x^{j_{k+1}},T_{s}x^{j_{k+1}}) &\leq d(x^{j_{k+1}},T_{t_{j_{k+1}}}x^{j_{k+1}}) + d(T_{t_{j_{k+1}}}x^{j_{k+1}},T_{s}x^{j_{k+1}}) \\
&= d(x^{j_{k+1}},T_{t_{j_{k+1}}}x^{j_{k+1}}) + d(T_{s}T_{t_{j_{k}}}x^{j_{k+1}},T_{s}x^{j_{k+1}}) \\
&\leq d(x^{j_{k+1}},T_{t_{j_{k+1}}}x^{j_{k+1}}) + d(T_{t_{j_{k}}}x^{j_{k+1}},x^{j_{k+1}}).
\end{align*}
Letting $k \tendsto \infty$ and apply Lemma \ref{lem:fact3}, we get $\lim_{k \tendsto \infty} d(x^{j_{k}},T_{s}x^{j_{k}}) = 0$.
\end{proof}

\begin{lem}\label{lem:fact5}
Suppose that $s \in J$ has the property \eqref{eqn:cdn:sjk} and assume further that $\sup_{k \in \N} (j_{k} - k) < \infty$. Then, the following assertions hold.
\begin{enumerate}[label=(\roman*)]
\item\label{cdn:fact5-1} $\lim_{k \tendsto \infty} d(x^{k},x^{j_{k}}) = 0$.
\item\label{cdn:fact5-2} $(x^{k})$ is an approximate fixed point sequence for $T_{s}$.
\end{enumerate}
\end{lem}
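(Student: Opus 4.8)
The plan is to derive both claims from the asymptotic regularity already recorded in Lemma \ref{lem:fact3}, exploiting the standing hypothesis $N := \sup_{k \in \N}(j_k - k) < \infty$.

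For \ref{cdn:fact5-1} I would first observe that, $(j_k)$ being a strictly increasing sequence of positive integers, one has $k \le j_k$ and hence $0 \le j_k - k \le N$ for every $k$. Chaining the triangle inequality along $x^k, x^{k+1}, \dots, x^{j_k}$ then gives
\[
d(x^k, x^{j_k}) \;\le\; \sum_{i=k}^{j_k-1} d(x^i, x^{i+1}) \;\le\; N \sup_{i \ge k} d(x^i, x^{i+1}),
\]
and since $d(x^i, x^{i+1}) \tendsto 0$ by Lemma \ref{lem:fact3}\ref{cdn:fact2-2}, the right-hand side tends to $0$ as $k \tendsto \infty$. This settles \ref{cdn:fact5-1}.

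For \ref{cdn:fact5-2} the idea is to transfer the approximate fixed point property of the subsequence $(x^{j_k})$ for $T_s$ (Lemma \ref{lem:fact4}) to the whole sequence. Iterating the first assertion of Lemma \ref{lem:fact1} from index $k$ up to $j_k$ and using transitivity of $\preceq$ yields $x^k \preceq x^{j_k}$; in particular $x^k$ and $x^{j_k}$ are $\preceq$-comparable, so $\preceq$-nonexpansivity of $T_s$ applies to this pair. Then
\[
d(x^k, T_s x^k) \;\le\; d(x^k, x^{j_k}) + d(x^{j_k}, T_s x^{j_k}) + d(T_s x^{j_k}, T_s x^k) \;\le\; 2\,d(x^k, x^{j_k}) + d(x^{j_k}, T_s x^{j_k}),
\]
where the first summand tends to $0$ by \ref{cdn:fact5-1} and the second by Lemma \ref{lem:fact4}. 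Hence $d(x^k, T_s x^k) \tendsto 0$.

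The argument is mostly bookkeeping, and I do not anticipate a serious obstacle. The one conceptual point is that the hypothesis $\sup_{k}(j_k - k) < \infty$ is precisely what upgrades the asymptotic regularity $d(x^k, x^{k+1}) \to 0$ into control of the a priori unboundedly far apart pair $x^k, x^{j_k}$, thereby bridging the subsequential conclusion of Lemma \ref{lem:fact4} to a statement about the full Krasnosel'ski\u\i{} sequence. The order structure enters only through the comparability $x^k \preceq x^{j_k}$, which is what legitimizes the use of $\preceq$-nonexpansivity of $T_s$ in the displayed estimate.
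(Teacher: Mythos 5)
Your proposal is correct and follows essentially the same route as the paper: part \ref{cdn:fact5-1} is the telescoping triangle-inequality estimate $d(x^k,x^{j_k}) \le \sum_{i=k}^{j_k-1} d(x^i,x^{i+1})$ bounded via $j_k - k \le \sup_k(j_k-k)$ together with Lemma \ref{lem:fact3}, and part \ref{cdn:fact5-2} is the same three-term decomposition of $d(x^k,T_s x^k)$ with $\preceq$-nonexpansivity applied to the comparable pair $x^k \preceq x^{j_k}$ coming from Lemma \ref{lem:fact1}. Your sup-based phrasing of \ref{cdn:fact5-1} is only a cosmetic variant of the paper's $\varepsilon$-argument, and your explicit remark that comparability is what licenses the nonexpansivity step is a point the paper leaves implicit.
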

\begin{proof}
\ref{cdn:fact5-1} Put $P := \sup_{k \in \N} (j_{k} - k)$. If $P = 0$, the conclusion is already verified. Hence, assume that $P > 0$. Let $\varepsilon > 0$ be chosen arbitrarily. From Lemma \ref{lem:fact3}, we know that $d(x^{k},x^{k+1}) < \varepsilon/P$ holds for any $k$ sufficiently large. For such large $k \in \N$, we have
\[
d(x^{k},x^{j_{k}}) \leq \sum_{i=k}^{j_{k}-1} d(x^{i},x^{i+1}) < (j_{k} - k)\frac{\varepsilon}{P} \leq P \cdot \frac{\varepsilon}{P} = \varepsilon.
\]
This proves $\lim_{k \tendsto \infty} d(x^{k},x^{j_{k}}) = 0$.

\ref{cdn:fact5-2} By Lemma \ref{lem:fact1} and the $\preceq$-nonexpansivity, we have
\begin{align*}
d(x^{k},T_{s}x^{k}) &\leq d(x^{k},x^{j_{k}}) + d(x^{j_{k}},T_{s}x^{j_{k}}) + d(T_{s}x^{j_{k}},T_{s}x^{k}) \\
&\leq d(x^{k},x^{j_{k}}) + d(x^{j_{k}},T_{s}x^{j_{k}}) + d(x^{j_{k}},x^{k})
\end{align*}
Letting $k \tendsto \infty$, apply the earlier fact \ref{cdn:fact5-1} and Lemma \ref{lem:fact4}, we have
\[
\lim_{k \tendsto \infty} d(x^{k},T_{s}x^{k}) = 0,
\]
which is the desired result.
\end{proof}

After having gathered all the technical lemmas required for the convergence result, we now state and prove the main theorem of this section.

\begin{thm}
Assume that all $s \in J$ has the property \eqref{eqn:cdn:sjk} with $\sup_{j_{k}-k} < \infty$. Then, the Krasnosel'ski\u\i{} sequence $(x^{k})$ generated from $x^{0}$ is $\Delta$-convergent to a point $w \in \Fix(\Gamma)$ with $x^{0} \preceq w$.
\end{thm}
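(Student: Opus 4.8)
The plan is to follow the classical two-stage scheme for $\Delta$-convergence results: first show that every $\Delta$-cluster point of $(x^k)$ is a common fixed point of $\Gamma$ lying above $x^0$, then promote the mere existence of such cluster points to genuine $\Delta$-convergence via a comparison of asymptotic radii. Before anything else I would record the standing facts. Since $x^0\in C$ (so that $T_{t_k}x^0$ is defined) and $C$ is convex, an immediate induction on \eqref{eqn:KM} gives $(x^k)\subset C$; hence for every $z\in C$ we have $\tau(z;(x^k))=\limsup_k d(x^k,z)\le\diam(C)<D_\kappa/2$, so Proposition \ref{prop:Deltalimit-Conv} applies to $(x^k)$ and to each of its subsequences, and in particular every asymptotic center $A(\cdot)$ appearing below is a singleton. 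By Lemma \ref{lem:fact1}(i) the sequence $(x^k)$ is $\preceq$-nondecreasing, so the intervals $[x^k,\rightarrow)$ are closed, convex, and nested decreasingly. Finally, the existence theorem of Section~3 produces $w_0\in\Fix(\Gamma)$ with $x^0\preceq w_0$, so $\Fix(\Gamma)\ne\emptyset$ and Lemma \ref{lem:fact2} is available for every fixed point above $x^0$.

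Next I would prove the demiclosedness statement: \emph{if a subsequence $(u^k)$ of $(x^k)$ is $\Delta$-convergent to $x$, then $x\in C$, $x\in\Fix(\Gamma)$, and $x^0\preceq x$ with $u^k\preceq x$ for all $k$.} Indeed, by Proposition \ref{prop:Deltalimit-Conv}(iii), $x\in\bigcap_n\cl\conv\{u^n,u^{n+1},\dots\}\subset C$; each tail $\{u^n,u^{n+1},\dots\}$ lies in $[u^n,\rightarrow)$, these intervals are closed, convex, and nested and run through arbitrarily large iterates, so $x\in\bigcap_k[x^k,\rightarrow)$, i.e. $x^k\preceq x$ for every $k$ (hence also $u^k\preceq x$). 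Now fix $s\in J$. Under the hypothesis, Lemma \ref{lem:fact5}(ii) says $(x^k)$, hence $(u^k)$, is an approximate fixed point sequence for $T_s$; using $u^k\preceq x$ and the $\preceq$-nonexpansivity of $T_s$,
\begin{align*}
\tau(T_sx;(u^k))&=\limsup_k d(u^k,T_sx)\le\limsup_k\bigl(d(u^k,T_su^k)+d(T_su^k,T_sx)\bigr)\\
&\le\limsup_k d(u^k,x)=\tau(x;(u^k)).
\end{align*}
Since $A(u^k)=\{x\}$ is the \emph{unique} minimizer of $\tau(\cdot;(u^k))$, this forces $T_sx=x$; as $s\in J$ was arbitrary, $x\in\Fix(\Gamma)$.

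Finally I would close the argument. Let $u,v$ be $\Delta$-limits of two subsequences of $(x^k)$; by the previous step $u,v\in\Fix(\Gamma)$ with $x^0\preceq u$ and $x^0\preceq v$, so by Lemma \ref{lem:fact2} the numbers $r:=\lim_k d(u,x^k)$ and $r':=\lim_k d(v,x^k)$ exist. Passing to a subsequence $(v^k)$ that is $\Delta$-convergent to $v$, we get $\tau(u;(v^k))=r$ and $\tau(v;(v^k))=r'$ (a convergent real sequence has the same limit along any subsequence), and minimality of $v=A(v^k)$ gives $r'\le r$; the symmetric computation along a subsequence converging to $u$ gives $r\le r'$, so $r=r'$, whence $u$ also minimizes $\tau(\cdot;(v^k))$ and uniqueness of the asymptotic center yields $u=v$. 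Thus $(x^k)$ is bounded with exactly one $\Delta$-cluster point $w$, and $w\in\Fix(\Gamma)$, $x^0\preceq w$. To obtain $\Delta$-convergence, take an arbitrary subsequence $(y^k)$, write $A(y^k)=\{\zeta\}$, pass to a $\Delta$-convergent sub-subsequence of $(y^k)$ (its limit is a $\Delta$-cluster point of $(x^k)$, hence $w$), and rerun the same $\tau$-comparison — using that $\lim_k d(w,x^k)$ exists — to conclude $\zeta=w$. Hence $A(y^k)=\{w\}$ for every subsequence $(y^k)$, i.e. $(x^k)$ is $\Delta$-convergent to $w\in\Fix(\Gamma)$ with $x^0\preceq w$.

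The step I expect to need the most care is the demiclosedness argument, precisely because $T_s$ is nonexpansive only on $\preceq$-comparable pairs: one must secure that the $\Delta$-limit $x$ of a subsequence is genuinely $\succeq$ every iterate (this is where assumption (A1)–(A2) and the monotonicity from Lemma \ref{lem:fact1}(i) are essential, via Proposition \ref{prop:Deltalimit-Conv}(iii)) before the displayed chain of inequalities is even legitimate. Once that order bookkeeping is in place, the rest is a routine transcription of the asymptotic-center machinery already packaged in Proposition \ref{prop:Deltalimit-Conv}.
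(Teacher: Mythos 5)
Your proposal is correct and follows essentially the same route as the paper: Lemma \ref{lem:fact5} gives the approximate fixed point property, the asymptotic-center comparison shows every $\Delta$-cluster point is a common fixed point above $x^{0}$, and Lemma \ref{lem:fact2} together with uniqueness of asymptotic centers forces all cluster points to coincide. Your write-up is in fact slightly more careful than the paper's at two points — justifying $x^{k}\preceq x$ via Proposition \ref{prop:Deltalimit-Conv}(iii) rather than asserting it, and verifying $A(y^{k})=\{w\}$ for \emph{every} subsequence as the definition of $\Delta$-convergence demands — but these are refinements of the same argument, not a different one.
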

\begin{proof}
First, note that the boundedness of $C$ implies the boundedness of $(x^{k})$. So $(x^{k})$ contains a $\Delta$-convergent subsequence. Suppose that $(y^{k})$ and $(z^{k})$ be two $\Delta$-convergent subsequences of $(x^{k})$ whose $\Delta$-limits are $y^{\ast}$ and $z^{\ast}$, respectively. Suppose that $y^{\ast} \neq z^{\ast}$.

Since $(x^{k})$ is $\preceq$-nondecreasing, we have $y^{k} \preceq y^{\ast}$ and $z^{k} \preceq z^{\ast}$ for all $k \in \N$. Let $s \in J$. From Lemma \ref{lem:fact5}, we have
\begin{align*}
\limsup_{k \tendsto \infty} d(y^{k},T_{s}y^{\ast}) & \leq \limsup_{k \tendsto \infty} d(y^{k},T_{s}y^{k}) + \limsup_{k \tendsto \infty} d(T_{s}y^{k},T_{s}y^{\ast}) \\
&\leq \limsup_{k \tendsto \infty} d(y^{k},y^{\ast}).
\end{align*}
Since $y^{\ast}$ is the unique asymptotic center of $(y^{k})$, it follows that $y^{\ast} = T_{s}y^{\ast}$. With the same arguments, we also have $z^{\ast} = T_{s}z^{\ast}$. Since $s \in J$ is arbitrary, we have $y^{\ast},z^{\ast} \in \Fix(\Gamma)$ with $x^{0} \preceq y^{\ast}$ and $x^{0} \preceq z^{\ast}$. Set $r_{1} := \lim_{k \tendsto \infty} d(x^{k},y^{\ast})$ and $r_{1} := \lim_{k \tendsto \infty} d(x^{k},z^{\ast})$, where the existence of such limits follows from Lemma \ref{lem:fact2}. From the fact that $(y^{k})$ and $(z^{k})$ are subsequences of $(x^{k})$ and the uniqueness of the asymptotic center, we have
\[
r_{1} = \lim_{k \tendsto \infty} d(x^{k},y^{\ast}) = \lim_{k \tendsto \infty} d(y^{k},y^{\ast}) < \limsup_{k \tendsto \infty} d(y^{k},z^{\ast}) = r_{2}
\]
and also
\[
r_{2} = \lim_{k \tendsto \infty} d(x^{k},z^{\ast}) = \lim_{k \tendsto \infty} d(z^{k},z^{\ast}) < \lim_{k \tendsto \infty} d(z^{k},y^{\ast}) = r_{2}.
\]
This gives a contradiction, and therefore it must be the case that $y^{\ast} = z^{\ast}$. In other words, $(x^{\ast})$ has only one $\Delta$-accumulation point, denoted with $w$. Similarly, we have $x^{k} \preceq w$ for all $k \in \N$.

Let $s \in J$. The Lemma \ref{lem:fact5} yields
\begin{align*}
\limsup_{k \tendsto \infty} d(x^{k},T_{s}w) &\leq \limsup_{k \tendsto \infty} d(x^{k},T_{s}x^{k}) + \limsup_{k \tendsto \infty} d(T_{s}x^{k},T_{s}w) \\
&\leq \limsup_{k \tendsto \infty} d(x^{k},w).
\end{align*}
The uniqueness of the asymptotic center guarantees that $w = T_{s}w$ and further that $w \in \Fix(\Gamma)$. Additionally, the fact that $(x^{k})$ is $\preceq$-nondecreasing yields $x^{0} \preceq w$.
\end{proof}

\section{Implicit Approximation Scheme}

In the previous section, we deals with the Krasnosel'ski\u\i{} approximation schemes where the computation of each iterate can be carried out explicitly by a specific formula. In this section, we present another route to approximate a solution $w \in \Fix(\Gamma)$ by using the Browder approximation schemes which is of different nature to the Krasnosel'ski\u\i{} approximation schemes. In the Browder approximation scheme, there is no specific closed form for each iterate. However, it can be simply computed by the use of Picard's procedure.

Also note again that we have not seen Browder approximation in this setting even when the space is linear. Since a Hilbert space is $\CAT(0)$, our next main theorem applies there.

The construction and several properties of the algorithm studied in this section are based on a theorem of Nieto and Rodr\'iguez-L\'opez \cite{MR2212687}.

\begin{thm}[\cite{MR2212687}]\label{thm:N-RL}
Let $(X,d)$ be a complete metric space that is endowed with a partial ordering $\preceq$ with the following property:
\begin{equation}\label{eqn:closedordering}
\text{If a $\preceq$-nondecreasing sequence $(x^{k})$ in $X$ converges to $x^{\ast}$, then $x^{k} \preceq x^{\ast}$ for each $k \in \N$.}
\end{equation}
Suppose that $T : X \to X$ is a mapping in which \eqref{eqn:Lip} holds for each $x,y \in X$ that are $\preceq$-comparable with $L < 1$. If there is a point $x^{0} \in X$ such that $x^{0} \preceq Tx^{0}$, then
\begin{enumerate}[label=(\roman*)]
\item $T$ has a fixed point.
\item The orbit $(T^{k}x^{0})$ converges to a fixed point $w \in \Fix(T)$.
\item $T^{k}x^{0} \preceq w$ for all $k \in \N$.
\end{enumerate}
\end{thm}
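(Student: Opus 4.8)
The plan is to run the classical Picard iteration of Ran--Reurings and Nieto--Rodr\'iguez-L\'opez; the only wrinkle is that the contraction estimate \eqref{eqn:Lip} is available only along $\preceq$-comparable pairs, so one must first produce a chain on which to use it. Write $x^{k} := T^{k}x^{0}$. Using $x^{0} \preceq Tx^{0}$ together with the order-preservation of $T$ (which we take to be part of the standing hypothesis, as this is what makes the orbit a chain), an immediate induction gives $x^{k} \preceq x^{k+1}$ for all $k \in \N$; by transitivity, $x^{k}$ and $x^{m}$ are then $\preceq$-comparable whenever $m \geq k$.

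First I would show that $(x^{k})$ is Cauchy. Applying \eqref{eqn:Lip} to the comparable pair $x^{k-1} \preceq x^{k}$ gives $d(x^{k},x^{k+1}) = d(Tx^{k-1},Tx^{k}) \leq L\, d(x^{k-1},x^{k})$, hence $d(x^{k},x^{k+1}) \leq L^{k}\, d(x^{0},x^{1})$ after iterating. For $m > n$, the triangle inequality and the geometric series then yield $d(x^{n},x^{m}) \leq \sum_{i=n}^{m-1} d(x^{i},x^{i+1}) \leq \frac{L^{n}}{1-L}\, d(x^{0},x^{1}) \tendsto 0$ as $n \tendsto \infty$. Completeness of $X$ furnishes a limit $w := \lim_{k \tendsto \infty} x^{k}$, which will be the sought fixed point and, once identified as such, also establishes conclusion (ii).

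Next I would identify $w$ as a fixed point, which is precisely where the closedness property \eqref{eqn:closedordering} is used. Since $(x^{k})$ is $\preceq$-nondecreasing and converges to $w$, \eqref{eqn:closedordering} yields $x^{k} \preceq w$ for every $k$ --- this is already conclusion (iii), and it also makes $x^{k}$ and $w$ $\preceq$-comparable. Applying \eqref{eqn:Lip} once more, $d(x^{k+1},Tw) = d(Tx^{k},Tw) \leq L\, d(x^{k},w) \tendsto 0$, while at the same time $x^{k+1} \tendsto w$; uniqueness of limits forces $Tw = w$, giving (i) and completing (ii).

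The argument is otherwise routine, and I expect the only delicate point to be the passage to the limit inside $T$: because $T$ is not assumed continuous (nor Lipschitz on non-comparable pairs), the mere convergence $x^{k} \to w$ does not suffice, and the hypothesis \eqref{eqn:closedordering} is exactly what keeps $w$ above the entire orbit and thereby restores comparability at the limit. No uniqueness of the fixed point is asserted, so none need be proved.
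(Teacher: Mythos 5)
Your argument is correct, and it is essentially the classical Nieto--Rodr\'iguez-L\'opez proof: the paper itself offers no proof to compare against, since Theorem \ref{thm:N-RL} is quoted from \cite{MR2212687} as a known result. The one substantive observation you make --- that the statement as transcribed omits the hypothesis that $T$ be $\preceq$-nondecreasing, without which $x^{0} \preceq Tx^{0}$ cannot be propagated to $T^{k}x^{0} \preceq T^{k+1}x^{0}$ and the orbit need not be a chain --- is accurate and worth flagging; the monotonicity assumption is present in the original reference, and in the only place the theorem is invoked in this paper (Lemma \ref{lem:factB2}) the mappings $T_{t}^{\lambda}$ are indeed shown to be $\preceq$-nondecreasing in Lemma \ref{lem:factB1}, so the application is sound. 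The remaining steps --- the geometric decay $d(x^{k},x^{k+1}) \leq L^{k}d(x^{0},x^{1})$ along consecutive comparable pairs, the Cauchy estimate, and the use of \eqref{eqn:closedordering} to recover comparability of $x^{k}$ with the limit $w$ so that \eqref{eqn:Lip} can be applied in place of continuity of $T$ --- are all correctly executed and correctly motivated.
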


Recall again that $\Gamma := \{T_{t}\}_{t \in J}$ is a $\preceq$-nonexpansive semigroup on a bounded closed convex set $C \subset X$, and $x^{0} \in C$ is fixed with the property $x^{0} \preceq T_{t}x^{0}$ for all $t \in J$. At each $t \in J$ and $\lambda \in (0,1)$, we define $T_{t}^{\lambda} := (1-\lambda) T_{t} \oplus \lambda x^{0}$.

Let us now give the following simple facts which are essential in our main construction in this section.

\begin{lem}\label{lem:factB1}
For each $\lambda \in (0,1)$ and $t \in J$, the following facts hold.
\begin{enumerate}[label=(\roman*)]
\item\label{cdn:factB1-1} $T_{t}^{\lambda}$ is a $\preceq$-contraction with constant $(1-\lambda)$.
\item\label{cdn:factB1-2} $T_{t}^{\lambda}$ is $\preceq$-nondecreasing
\item\label{cdn:factB1-3} $x^{0} \preceq T_{t}^{\lambda} x^{0} \preceq T_{t}x^{0}$.
\end{enumerate}
\end{lem}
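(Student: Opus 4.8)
The plan is to verify the three claims in order, each following quickly from the definition $T_{t}^{\lambda} := (1-\lambda)T_{t} \oplus \lambda x^{0}$ together with the compatibility assumptions (A1)--(A2) on $\preceq$ and the $\preceq$-nonexpansivity of $\Gamma$.

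For \ref{cdn:factB1-1}, I would take two $\preceq$-comparable points $x,y \in C$. Since $T_{t}$ preserves $\preceq$ (property (S4)), the images $T_{t}x$ and $T_{t}y$ are again $\preceq$-comparable, and by (S5) (with $L = 1$) we have $d(T_{t}x,T_{t}y) \leq d(x,y)$. Now apply Lemma \ref{lem:basicCAT(k)}(ii): since the points $(1-\lambda)T_{t}x \oplus \lambda x^{0}$ and $(1-\lambda)T_{t}y \oplus \lambda x^{0}$ both lie on geodesics emanating toward the common endpoint $x^{0}$, the convexity of $d(\cdot, x^{0})$ (or more directly, the elementary geodesic-convexity estimate in $\CAT(\kappa)$ for segments sharing an endpoint) gives
\[
d\!\left(T_{t}^{\lambda}x, T_{t}^{\lambda}y\right) \leq (1-\lambda)\, d(T_{t}x, T_{t}y) \leq (1-\lambda)\, d(x,y).
\]
Thus $T_{t}^{\lambda}$ is a $\preceq$-contraction with constant $1-\lambda < 1$. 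For \ref{cdn:factB1-2}, suppose $x \preceq y$. Then $T_{t}x \preceq T_{t}y$ by (S4), and since trivially $x^{0} \preceq x^{0}$, assumption (A2) applied with the comparable pairs $(T_{t}x, T_{t}y)$ and $(x^{0}, x^{0})$ and with weight $1-\lambda$ yields $(1-\lambda)T_{t}x \oplus \lambda x^{0} \preceq (1-\lambda)T_{t}y \oplus \lambda x^{0}$, i.e.\ $T_{t}^{\lambda}x \preceq T_{t}^{\lambda}y$.

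For \ref{cdn:factB1-3}, I would use the standing hypothesis $x^{0} \preceq T_{t}x^{0}$. The point $T_{t}^{\lambda}x^{0} = (1-\lambda)T_{t}x^{0} \oplus \lambda x^{0}$ lies on the geodesic segment $\llbracket x^{0}, T_{t}x^{0}\rrbracket$, so by the Lemma immediately following (A1)--(A2) (with $a = x^{0}$, $b = T_{t}x^{0}$, and $\lambda \leq 1 = \eta$, reading it the appropriate way) we get $x^{0} = (1-0)x^{0}\oplus 0\cdot T_{t}x^{0} \preceq (1-\lambda)x^{0}\oplus \lambda T_{t}x^{0}$ and $(1-\lambda)x^{0}\oplus\lambda T_{t}x^{0} \preceq T_{t}x^{0}$; rewriting $(1-\lambda)x^{0}\oplus\lambda T_{t}x^{0}$ as $(1-\mu)T_{t}x^{0}\oplus\mu x^{0}$ with $\mu = 1-\lambda \in (0,1)$ shows this is exactly $T_{t}^{\lambda}x^{0}$, giving the chain $x^{0} \preceq T_{t}^{\lambda}x^{0} \preceq T_{t}x^{0}$.

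I do not anticipate a genuine obstacle here; the statement is a routine bookkeeping lemma. The one place requiring a little care is \ref{cdn:factB1-1}: one must make sure the contraction estimate for convex combinations with a \emph{common} endpoint is legitimate in $\CAT(\kappa)$ with $\kappa > 0$ on the small set $C$ (where $\diam(C) < D_{\kappa}/2$), rather than silently invoking a linear-space identity. This is covered by the convexity of $d(\cdot,x^{0})$ on $B(x^{0}, D_{\kappa}/2)$ from Lemma \ref{lem:basicCAT(k)}(ii) (or alternatively by Lemma \ref{lem:basicCAT(k)}(iii) followed by taking square roots), so the argument goes through. A second minor point is to confirm that $T_{t}^{\lambda}$ indeed maps $C$ into $C$, which follows from convexity of $C$ since $T_{t}x^{0}, T_{t}x \in C$ and $x^{0}\in C$.
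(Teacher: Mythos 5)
Your proposal is correct and follows essentially the same route as the paper's own proof: the metric convexity estimate for geodesics with the common endpoint $x^{0}$ for \ref{cdn:factB1-1}, assumption (A2) with the pair $(x^{0},x^{0})$ for \ref{cdn:factB1-2}, and the order-convexity of $\llbracket x^{0},T_{t}x^{0}\rrbracket$ for \ref{cdn:factB1-3}. If anything you are slightly more careful than the paper, whose displayed chain in \ref{cdn:factB1-1} ends at $\leq d(x,y)$ rather than the needed $\leq(1-\lambda)d(x,y)$; just note that the inequality $d((1-\lambda)a\oplus\lambda c,(1-\lambda)b\oplus\lambda c)\leq(1-\lambda)d(a,b)$ is really the joint convexity of the metric on sets of diameter less than $D_{\kappa}/2$ rather than a direct consequence of Lemma \ref{lem:basicCAT(k)}(ii) alone.
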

\begin{proof}
\ref{cdn:factB1-1} Let $x,y \in X$ with $x \preceq y$. We have
\begin{align*}
d(T_{t}^{\lambda}x,T_{t}^{\lambda}y) &= d((1-\lambda)T_{t}x \oplus \lambda x^{0},(1-\lambda)T_{t}y \oplus \lambda x^{0}) \\
&\leq (1-\lambda)d(T_{t}x,T_{t}y) \leq d(x,y).
\end{align*}
This shows the $\preceq$-contractivity of $T_{t}^{\lambda}$.

\ref{cdn:factB1-2} Let $x,y \in X$ with $x \preceq y$. Since $T_{t}$ is $\preceq$-nondecreasing, it is immediate to see that
\[
T_{t}^{\lambda} x = (1-\lambda)T_{t}x \oplus \lambda x^{0} \preceq (1-\lambda)T_{t}y \oplus \lambda x^{0} = T_{t}^{\lambda} y.
\]

\ref{cdn:factB1-3} Since $x^{0} \preceq T_{t}x^{0}$, we have $x^{0} \preceq (1-\lambda) T_{t}x^{0} \oplus x^{0} = T_{t}^{\lambda} x^{0} \preceq T_{t}x^{0}$.
\end{proof}

The following fact is obvious from the results aforestated. However, we collect it here for convenience and explicity.
\begin{lem}\label{lem:factB2}
Let $\lambda \in (0,1)$ and $t \in J$. Then, $\lim_{n \tendsto \infty} (T_{t}^{\lambda})^{n} = x_{t}^{\lambda} \in \Fix(T_{t}^{\lambda})$ with $(T_{t}^{\lambda}x^{0})^{n} \preceq x_{t}^{\lambda}$ for all $n \in \N$.
\end{lem}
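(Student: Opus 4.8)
The plan is to read off this lemma as a direct consequence of Theorem~\ref{thm:N-RL}, applied with the underlying space taken to be $C$ and the mapping taken to be $T_{t}^{\lambda}$. (Here I read the notation of the statement as $\lim_{n\to\infty}(T_{t}^{\lambda})^{n}x^{0}=x_{t}^{\lambda}$ and $(T_{t}^{\lambda})^{n}x^{0}\preceq x_{t}^{\lambda}$.) First I would reduce matters from $X$ to $C$. Since $X$ is complete and $C$ is closed, $C$ is itself a complete metric space; since $C$ is convex and $x^{0}\in C$, the map $T_{t}^{\lambda}=(1-\lambda)T_{t}\oplus\lambda x^{0}$ carries $C$ into $C$, so the orbit $\big((T_{t}^{\lambda})^{n}x^{0}\big)$ is well-defined and lies in $C$. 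It remains to see that $(C,d,\preceq)$ enjoys property~\eqref{eqn:closedordering}: if a $\preceq$-nondecreasing sequence $(y^{k})\subset C$ converges to $y^{\ast}$, then for each fixed $k$ the tail $\{y^{k},y^{k+1},\dots\}$ lies in the interval $[y^{k},\rightarrow)$, which is closed by assumption (A1), whence $y^{k}\preceq y^{\ast}$.

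Next I would verify the remaining hypotheses of Theorem~\ref{thm:N-RL} for $T_{t}^{\lambda}$, all of which were prepared in Lemma~\ref{lem:factB1}: it is $\preceq$-nondecreasing by part~\ref{cdn:factB1-2}, it satisfies \eqref{eqn:Lip} on $\preceq$-comparable pairs with constant $L=1-\lambda<1$ by part~\ref{cdn:factB1-1}, and the initial point satisfies $x^{0}\preceq T_{t}^{\lambda}x^{0}$ by part~\ref{cdn:factB1-3}. Theorem~\ref{thm:N-RL} then yields at once that $T_{t}^{\lambda}$ has a fixed point, that $\big((T_{t}^{\lambda})^{n}x^{0}\big)$ converges to some $x_{t}^{\lambda}\in\Fix(T_{t}^{\lambda})$, and that $(T_{t}^{\lambda})^{n}x^{0}\preceq x_{t}^{\lambda}$ for all $n\in\N$ — which is exactly the assertion.

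There is essentially no obstacle here; this lemma is a bookkeeping step whose entire content is assembled from Theorem~\ref{thm:N-RL} and Lemma~\ref{lem:factB1}. The only point deserving a moment's care is the reduction to the invariant set $C$ — namely that $C$ is complete, that $T_{t}^{\lambda}$ is a genuine self-map of $C$, and that property~\eqref{eqn:closedordering} is inherited from (A1) — and each of these is routine.
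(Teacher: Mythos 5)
Your proposal is correct and follows exactly the paper's route: the paper likewise observes that assumption (A1) (closedness of the $\preceq$-intervals) gives property \eqref{eqn:closedordering}, and then applies Lemma \ref{lem:factB1} together with Theorem \ref{thm:N-RL}. Your write-up merely spells out the routine reduction to the complete invariant set $C$, which the paper leaves implicit.
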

\begin{proof}
Since all $\preceq$-intervals are closed, the condition \eqref{eqn:closedordering} is satisfied. Apply Lemma \ref{lem:factB1} and Theorem \ref{thm:N-RL} to arrive at the conclusion.
\end{proof}

Now, let us define the Browder approximation associated with $\Gamma$. Suppose that $(\lambda_{k})$ a strictly decreasing sequence in $(0,1)$ and $(t_{k})$ is a strictly increasing sequence of positive reals. In this situation, we adopt the notions $T^{[k]} := T_{t_{k}}^{\lambda_{k}}$ for each $k \in \N$. Next, generate for each $k \in \N$ the successive point
\[
x^{k} := \lim_{n \tendsto \infty} (T^{[k]})^{n} x^{0}.
\]
In this case, the sequence $(x^{k})$ is called the \emph{Browder sequence generated from $x^{0}$}. One may observe from Lemmas \ref{lem:factB1} and \ref{lem:factB2} that for each $k \in \N$, $x^{k} \in \Fix(T^{[k]})$ and $(T^{[k]})^{n}x^{0} \preceq x^{k}$ for all $n \in \N$. Also, we can see that $((T^{[k]})^{n}x^{0})$ is $\preceq$-nondecreasing.

For a technical reason, assume throughout this section that $t^{0} \in J\setminus\{0\}$ and $t^{k+1} := 2t^{k}$ for $k \in \N$.

\begin{lem}\label{lem:factB3}
The following assertions hold for each $k \in \N$.
\begin{enumerate}[label=(\roman*)]
\item\label{cdn:factB3-1} $x^{k} \preceq x^{k+1}$.
\item\label{cdn:factB3-2} $x^{k} \preceq T_{t_{k}}x^{k}$.
\end{enumerate}
\end{lem}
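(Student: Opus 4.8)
\textbf{Proof proposal for Lemma~\ref{lem:factB3}.}

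The plan is to mimic the structure of Lemma~\ref{lem:fact1} from the explicit scheme, proving both assertions simultaneously by induction on $k$, using the order-compatibility axioms (A1)--(A2), the monotonicity facts of Lemma~\ref{lem:factB1}, and the key observation that each $x^{k}$ is a fixed point of $T^{[k]}=T_{t_{k}}^{\lambda_{k}}$ obtained as the limit of the $\preceq$-nondecreasing Picard orbit $((T^{[k]})^{n}x^{0})$. First I would record the base step: since $x^{0}\preceq T_{t_{0}}x^{0}$ by hypothesis and $x^{0}\in\Fix(T^{[0]})$ is the increasing limit of $((T^{[0]})^{n}x^{0})$ with $x^{0}=(T^{[0]})^{0}x^{0}\preceq x^{0}$, we get $x^{0}\preceq x^{0}$ trivially and must establish $x^{0}\preceq T_{t_{0}}x^{0}$, which is exactly the standing assumption on $x^{0}$.

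For the inductive step I would argue as follows. Assume $x^{k}\preceq T_{t_{k}}x^{k}$ (assertion (ii) at stage $k$). Then by Lemma~\ref{lem:factB1}\ref{cdn:factB1-3}, applied with the point $x^{k}$ in place of the base point where appropriate, together with the fact that $x^{0}\preceq x^{k}$ (which follows from $x^{0}=(T^{[k]})^{0}x^{0}\preceq x^{k}$), one shows $x^{k}\preceq T_{t_{k+1}}^{\lambda_{k+1}}x^{k}=T^{[k+1]}x^{k}$; the point is that $T_{t_{k+1}}$ preserves $\preceq$, the semigroup property gives $T_{t_{k+1}}=T_{t_{k}}\circ T_{t_{k}}$ since $t_{k+1}=2t_{k}$, hence $T_{t_{k+1}}x^{k}=T_{t_{k}}(T_{t_{k}}x^{k})\succeq T_{t_{k}}x^{k}\succeq x^{k}$, and then convexity of the geodesic interpolation with the base point $x^{0}\preceq x^{k}$ via axiom (A2) yields $x^{k}\preceq T^{[k+1]}x^{k}$. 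Consequently $x^{k}$ is a point below its own image under the $\preceq$-nondecreasing contraction $T^{[k+1]}$, so iterating gives $x^{k}\preceq (T^{[k+1]})^{n}x^{k}$ for all $n$; passing to the limit and using closedness of $\preceq$-intervals (axiom (A1)) together with $\lim_{n}(T^{[k+1]})^{n}x^{k}=\lim_{n}(T^{[k+1]})^{n}x^{0}=x^{k+1}$ (uniqueness of the fixed point of the contraction $T^{[k+1]}$ forces the limit to be independent of the starting point) yields $x^{k}\preceq x^{k+1}$, which is assertion (i).

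It remains to propagate assertion (ii) to stage $k+1$, i.e.\ $x^{k+1}\preceq T_{t_{k+1}}x^{k+1}$. Here I would use that $x^{k+1}\in\Fix(T^{[k+1]})$ means $x^{k+1}=(1-\lambda_{k+1})T_{t_{k+1}}x^{k+1}\oplus\lambda_{k+1}x^{0}$; since $x^{0}\preceq x^{k+1}$ (again from $x^{0}=(T^{[k+1]})^{0}x^{0}\preceq x^{k+1}$), axiom (A2) applied to the comparable pairs $x^{0}\preceq T_{t_{k+1}}x^{k+1}$ (which holds because $x^{0}\preceq x^{k}\preceq x^{k+1}$ and $x^{k+1}\preceq T^{[k+1]}\text{-orbit limit}$ combined with the fixed-point equation) and $x^{k+1}\preceq T_{t_{k+1}}x^{k+1}$... — wait, this is circular, so instead I would derive it directly: from $x^{k}\preceq x^{k+1}$ and $x^{k}\preceq T_{t_{k}}x^{k}$ one gets, using that $T_{t_{k}}$ is $\preceq$-preserving on comparable pairs, $T_{t_{k}}x^{k}\preceq T_{t_{k}}x^{k+1}$, and stacking the geodesic-convexity argument once more shows $x^{k+1}\preceq T_{t_{k+1}}x^{k+1}$ as the limit of an increasing orbit lies below any further application of the semigroup, closing the induction.

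The main obstacle I anticipate is the bookkeeping needed to show $x^{k}\preceq T^{[k+1]}x^{k}$ cleanly, because it mixes three ingredients — the doubling relation $t_{k+1}=2t_{k}$ (used via (S2) to iterate $T_{t_{k}}$), the geodesic convexity axiom (A2) governing the $(1-\lambda)\,\cdot\oplus\lambda x^{0}$ interpolation, and the fact that $x^{0}$ sits below everything — and one must be careful that (A2) is invoked only on genuinely comparable pairs. Once that single containment is established, the passage to the limit via closedness of order intervals and uniqueness of the contraction's fixed point is routine, exactly parallel to Lemma~\ref{lem:factB2}.
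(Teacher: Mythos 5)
Your overall strategy (a simultaneous induction on $k$) is reasonable in flavor, but two of its load-bearing steps are not actually justified. First, the pivotal claim $x^{k}\preceq T^{[k+1]}x^{k}$ does not follow from ``convexity of the geodesic interpolation with the base point $x^{0}\preceq x^{k}$'': knowing only $x^{0}\preceq x^{k}\preceq T_{t_{k+1}}x^{k}$ says nothing about the position of $x^{k}$ relative to $(1-\lambda_{k+1})T_{t_{k+1}}x^{k}\oplus\lambda_{k+1}x^{0}$ (in $\R^{2}$ with the componentwise order take $x^{0}=(0,0)$, $x^{k}=(0.9,0.1)$, $T_{t_{k+1}}x^{k}=(1,1)$, $\lambda_{k+1}=1/2$: the interpolated point is $(0.5,0.5)$, which does not dominate $x^{k}$). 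To make the step work you must start from the fixed-point identity $x^{k}=(1-\lambda_{k})T_{t_{k}}x^{k}\oplus\lambda_{k}x^{0}$, replace $T_{t_{k}}x^{k}$ by $T_{t_{k+1}}x^{k}$ via (A2), and then slide along the geodesic from $x^{0}$ to $T_{t_{k+1}}x^{k}$ using the unnumbered lemma of Section 3.1 together with the \emph{strict decrease} $\lambda_{k+1}<\lambda_{k}$ --- a hypothesis your argument never invokes, although the claim can fail without it. Relatedly, a $\preceq$-contraction need not have a globally unique fixed point (Theorem \ref{thm:N-RL} gives uniqueness only among comparable elements), so ``the limit is independent of the starting point'' must instead be argued by comparing the orbits $(T^{[k+1]})^{n}x^{0}$ and $(T^{[k+1]})^{n}x^{k}$ pairwise, which are $\preceq$-comparable by monotonicity, and applying the contraction estimate on comparable pairs.

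Second, your propagation of assertion (ii) to stage $k+1$ is not a proof: you notice the circularity yourself and then appeal to the principle that ``the limit of an increasing orbit lies below any further application of the semigroup,'' which is false in general. The paper proves (ii) directly and without induction: since $x^{0}\preceq x^{k}$ and $x^{0}\preceq T_{t_{k}}x^{0}$, order-preservation gives $x^{0}\preceq T_{t_{k}}x^{0}\preceq T_{t_{k}}x^{k}$, and then (A2) applied to the fixed-point identity yields $x^{k}=(1-\lambda_{k})T_{t_{k}}x^{k}\oplus\lambda_{k}x^{0}\preceq(1-\lambda_{k})T_{t_{k}}x^{k}\oplus\lambda_{k}T_{t_{k}}x^{k}=T_{t_{k}}x^{k}$. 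For (i) the paper also takes a different and cleaner route: it shows $(T^{[k]})^{n}x^{0}\preceq(T^{[k+1]})^{n}x^{0}$ for all $n$ by induction on $n$ (using the doubling $T_{t_{k}}\circ T_{t_{k}}=T_{t_{k+1}}$, order-preservation, and $\lambda_{k+1}<\lambda_{k}$), so that the orbit defining $x^{k}$ lies in the closed order interval $(\leftarrow,x^{k+1}]$ and hence so does its limit $x^{k}$. You should either adopt that route or repair the two steps above.
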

\begin{proof}
\ref{cdn:factB3-1} Fix $k \in \N$. Since $T_{t_{k}}$ is $\preceq$-nondecreasing, we apply Lemma \ref{lem:factB1} and obtain
\[
T_{t_{k}}T^{[k]}x^{0} \preceq T_{t_{k}}T_{t_{k}}x^{0} = T_{2t_{k}}x^{0} = T_{t_{k+1}}x^{0} \preceq T_{t_{k+1}}T^{[k+1]}x^{0}.
\]
Again, since $T_{t_{k}}$ is $\preceq$-nondecreasing and $(\lambda_{k})$ is strictly decreasing, we further have
\begin{align*}
(T^{[k]})^{2}x^{0} &= (1-\lambda_{k})T_{t_{k}}T^{[k]}x^{0} \oplus \lambda_{k} x^{0} \preceq (1-\lambda_{k})T_{t_{k+1}}T^{[k+1]}x^{0} \oplus \lambda_{k} x^{0} \\
&\preceq (1-\lambda_{k+1})T_{t_{k+1}}T^{[k+1]}x^{0} \oplus \lambda_{k+1} x^{0} = (T^{[k+1]})^{2}x^{0}.
\end{align*}
Now, let $n \in \N$ be an integer such that the statement $(T^{[k]})^{n}x^{0} \preceq (T^{[k+1]})^{n} x^{0}$ holds true. We may observe using similar facts that
\begin{align*}
T_{t_{k}}(T^{[k]})^{n}x^{0} &\preceq T_{t_{k}}T_{t_{k}}(T^{[k]})^{n-1}x^{0} = T_{t_{k+1}} (T^{[k]})^{n-1}x^{0}\\
&\preceq T_{t_{k+1}}(T^{[k]})^{n-1}T^{[k]}x^{0} = T_{t_{k+1}}(T^{[k]})^{n}x^{0}\\
&\preceq T_{t_{k+1}}(T^{[k+1]})^{n}x^{0}.
\end{align*}
Similarly, using the facts that $T_{t_{k}}$ is $\preceq$-nondecreasing and $(\lambda_{k})$ is strictly decreasing, we get
\begin{align*}
(T^{[k]})^{n+1}x^{0} &= (1-\lambda_{k})T_{t_{k}}(T^{[k]})^{n}x^{0} \oplus \lambda_{k} x^{0} \preceq (1-\lambda_{k})T_{t_{k+1}}(T^{[k+1]})^{n}x^{0} \oplus \lambda_{k} x^{0} \\
&\preceq (1-\lambda_{k+1})T_{t_{k+1}}(T^{[k+1]})^{n}x^{0} \oplus \lambda_{k+1} x^{0} = (T^{[k+1]})^{n+1}x^{0}.
\end{align*}
Hence, mathematical induction implies
\begin{equation}\label{eqn:compareSeq}
(T^{[k]})^{n}x^{0} \preceq (T^{[k+1]})^{n} x^{0}
\end{equation}
for every $n \in \N$.

Next, recall that Theorem \ref{thm:N-RL} gives $\lim_{n \tendsto \infty} (T^{[k]})^{n}x^{0} = x^{k}$ and $(T^{[k]})^{n}x^{0} \preceq x^{k}$ for all $n,k \in \N$. Taking \eqref{eqn:compareSeq} into account, we see now that $((T^{[k]})^{n}x^{0})$ is a sequence in the $\preceq$-interval $(\leftarrow,x^{k+1}]$, which is a closed set. Therefore, the point $x^{k}$ belongs to $(\leftarrow,x^{k+1}]$ as the limit of $((T^{[k]})^{n}x^{0})$ and we conclude here that $x^{k} \preceq x^{k+1}$ for any $k \in \N$.

\ref{cdn:factB3-2} Fix $k \in \N$. Since $x^{k} \in \Fix(T^{[k]})$, we have
\[
x^{k} = (1-\lambda_{k})T_{t_{k}}x^{k} \oplus \lambda_{k} x^{0}.
\]
Recall that $x^{0} \preceq x^{k}$ and $x^{0} \preceq T_{s}x^{0}$ for all $s \in J$. If $t \in J$ and $t \geq t_{k}$, we have $x^{0} \preceq T_{t_{k}}x^{0} \preceq T_{t_{k}}x^{k}$, which further yields
\[
x^{k} = (1-\lambda_{k})T_{t_{k}}x^{k} \oplus \lambda_{k} x^{0} \preceq (1-\lambda_{k})T_{t_{k}}x^{k} \oplus \lambda_{k} T_{t_{k}}x^{k} = T_{t_{k}}x^{k}. \tag*{\qedhere}
\]
\end{proof}

Before we go further, let us consider for a while an ordinary metric space $(Y,p)$ and a family $\Xi := \{S_{t}\}_{t \in J}$ of self-mappings on a bounded subset $K \subset Y$, indexed by a nontrivial subsemigroup $J$ of $[0,\infty)$. The following notions and lemma are variants to the similar definition given by Huang \cite{MRHUANG} for which $J$ is not necessarily the same as $[0,\infty)$. The proof is carried out in the same way so we leave it to the reader.

\begin{dfn}
The family $\Xi$ is called \emph{asymptotically regular} (or briefly, \emph{AR}) if for any $h \in J$ and $y \in K$, the following limit holds:
\[
\lim_{\substack{t \in J \\ t \tendsto \infty}} d(T_{t}y,T_{h}T_{t}y) = 0.
\]
Moreover, it is called \emph{uniformly asymptotically regular} (or brieftly, \emph{UAR}) if for any $h \in J$, the following limit holds:
\[
\lim_{\substack{t \in J \\ t \tendsto \infty}} \sup_{y \in K} d(T_{t}y,T_{h}T_{t}y) = 0.
\]
\end{dfn}

\begin{lem}\label{lem:AR-Fix}
If $\Xi$ is AR and $S_{t}S_{t'} = S_{t+t'}$ for $t,t' \in J$, then $\Fix(\Xi) = \Fix(S_{t})$ for any $t \in J$.
\end{lem}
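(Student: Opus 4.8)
The plan is to prove the two inclusions $\Fix(\Xi) \subset \Fix(S_t)$ and $\Fix(S_t) \subset \Fix(\Xi)$ separately. The first inclusion is immediate from the definition: if $y \in \Fix(\Xi)$ then $S_h y = y$ for \emph{every} $h \in J$, so in particular $y \in \Fix(S_t)$ for the given $t$. The content of the lemma lies entirely in the reverse inclusion, so I would spend the bulk of the argument there.

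For the reverse inclusion, fix $t \in J$ and suppose $y \in \Fix(S_t)$, i.e. $S_t y = y$. I must show $S_h y = y$ for every $h \in J$. The idea is to exploit the semigroup law to write $y = S_t y = S_t(S_t y) = \dots = S_{nt} y$ for all $n \in \N$; more generally, any sufficiently large element of the set $\{nt : n \in \N\}$ can serve as a ``time'' at which $y$ is fixed. Now apply the AR property at the point $y$ with the given $h$: since $S_{nt} y = y$ for all $n$, the sequence $(S_{nt} y)_{n}$ is the constant sequence $y$, and letting $nt \to \infty$ along this sequence in the AR limit gives
\[
0 = \lim_{n \to \infty} d\bigl(S_{nt} y, S_h S_{nt} y\bigr) = \lim_{n \to \infty} d\bigl(y, S_h y\bigr) = d(y, S_h y),
\]
whence $S_h y = y$. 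Since $h \in J$ was arbitrary, $y \in \Fix(\Xi)$. Note that the AR limit is taken over $t' \in J$ with $t' \to \infty$, and the subsequence $(nt)_n$ is a legitimate sequence in $J$ tending to $\infty$ (here one uses that $J$ is a nontrivial subsemigroup, so $t > 0$ forces $nt \to \infty$; the degenerate case $t = 0$ gives $S_0 = Id_K$ trivially, or is excluded by nontriviality depending on convention), so the computation is valid.

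The main subtlety — and the step I would be most careful about — is checking that the constant sequence $(S_{nt}y)_n = (y)_n$ is an admissible sequence along which to evaluate the AR limit, i.e. that ``$t' \to \infty$ in $J$'' genuinely includes the arithmetic progression $\{nt : n \in \N\}$, and that this progression is unbounded. This is exactly where the semigroup hypothesis $S_t S_{t'} = S_{t+t'}$ is used: without it, $S_t y = y$ would not propagate to $S_{nt} y = y$. Everything else is a routine application of the triangle inequality and the definition of AR, so no further calculation is needed.
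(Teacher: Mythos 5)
Your argument is correct and is exactly the standard one the paper has in mind: the paper itself omits the proof of Lemma \ref{lem:AR-Fix} (deferring to Huang's analogous result and leaving the adaptation to the reader), and the intended adaptation is precisely your computation of the AR limit along the unbounded arithmetic progression $nt \in J$, where $S_{nt}y = y$ forces $d(y, S_h y) = 0$. Your remark that $t = 0$ must be excluded (since $\Fix(S_0)$ need not equal $\Fix(\Xi)$) is a valid and worthwhile observation, consistent with how the lemma is actually applied in the paper (only with $t = t_k > 0$).
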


Now we get back to our main result.
\begin{thm}
Assume that $(\lambda_{k})$ is a strictly decreasing sequence in $(0,1)$ with the limit $\lim_{k \tendsto \infty} \alpha_{k} = 0$, and $(t_{k})$ is a sequence given by $t_{k+1} = 2t_{k}$ for $k \in \N$ with $t_{0} \in J\setminus\{0\}$. Also suppose that $\Gamma$ is UAR. Then, the Browder sequence converges strongly to $y \in \Fix(\Gamma)$ with $x^{0} \preceq y$. Moreover, if $q \in \Fix(\Gamma)$ satisfies $v^{k} \preceq q$ at each $k \in \N$ for some subsequence $(v^{k})$ of $(x^{k})$, then $d(x^{0},y) \leq d(x^{0},q)$.
\end{thm}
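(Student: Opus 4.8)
The plan is to follow the classical Browder-approximation argument, adapted to the $\CAT(\kappa)$ setting via the convexity inequality of Lemma~\ref{lem:basicCAT(k)}(iii), with the semigroup structure entering only through the UAR hypothesis. First I would establish an asymptotic-regularity estimate for the Browder iterates: from $x^{k}\in\Fix(T^{[k]})$ we have $x^{k}=(1-\lambda_{k})T_{t_{k}}x^{k}\oplus\lambda_{k}x^{0}$, so $d(x^{k},T_{t_{k}}x^{k})=\lambda_{k}\,d(x^{0},T_{t_{k}}x^{k})\le \lambda_{k}\diam(C)\to 0$. Combining this with the UAR property of $\Gamma$ (applied along $t_{k}\to\infty$) and Lemma~\ref{lem:AR-Fix}, I would deduce that for every fixed $h\in J$, $\lim_{k\to\infty}d(x^{k},T_{h}x^{k})=0$; this is the step that lets me later conclude a $\Delta$-limit of $(x^{k})$ lies in $\Fix(\Gamma)$.

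Next I would extract a $\Delta$-convergent subsequence $(x^{k_{i}})$ with $\Delta$-limit $y$ (using boundedness of $C$ and Proposition~\ref{prop:Deltalimit-Conv}). Using the asymptotic regularity just obtained, the standard asymptotic-center argument — as already carried out in the Explicit Section — gives $T_{h}y=y$ for all $h\in J$, hence $y\in\Fix(\Gamma)$; and since $(x^{k})$ is $\preceq$-nondecreasing (Lemma~\ref{lem:factB3}) with closed $\preceq$-intervals, $x^{k}\preceq y$ for all $k$, in particular $x^{0}\preceq y$. To upgrade $\Delta$-convergence to strong convergence of the \emph{whole} sequence, I would exploit the contractive identity satisfied by the Browder iterates against a fixed point: for any $q\in\Fix(\Gamma)$ that is $\preceq$-comparable with the iterates, using $x^{k}=(1-\lambda_{k})T_{t_{k}}x^{k}\oplus\lambda_{k}x^{0}$, the $\CAT(\kappa)$ quasilinearization / the inequality of Lemma~\ref{lem:basicCAT(k)}(iii) with $p=q$, and $T_{t_{k}}q=q$, one obtains an estimate of the form
\[
d^{2}(x^{k},q)\le (1-\lambda_{k})d^{2}(x^{k},q)+\lambda_{k}d^{2}(x^{0},q)-\tfrac{k}{2}\lambda_{k}(1-\lambda_{k})d^{2}(x^{k},T_{t_{k}}x^{k}),
\]
which rearranges to $d(x^{k},q)\le d(x^{0},q)$ for every such $q$ and every $k$. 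Taking $q=y$ (legitimate since $x^{k}\preceq y$) yields $\limsup_k d(x^{k},y)\le d(x^{0},y)$; but $d(x^{0},y)=\lim_i d(x^{k_i},y)\le\limsup_i\tau(y;(x^{k_i}))$, and pairing this with the asymptotic-center minimality of $y$ forces $d(x^{k},y)\to 0$, i.e.\ the whole sequence converges strongly to $y$ (a standard Opial-type / uniqueness-of-asymptotic-center argument, since any other subsequential $\Delta$-limit $y'$ is also a fixed point comparable with the iterates and the inequality $d(x^k,y')\le d(x^0,y')$ together with the strict-minimizer property of the asymptotic center forces $y'=y$).

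Finally, the ``moreover'' clause is essentially already in hand: if $q\in\Fix(\Gamma)$ satisfies $v^{k}\preceq q$ along a subsequence, the contractive estimate above (valid whenever the iterate and $q$ are comparable) gives $d(v^{k},q)\le d(x^{0},q)$, and letting $k\to\infty$ along that subsequence with $v^{k}\to y$ yields $d(x^{0},y)\le d(x^{0},q)$. The main obstacle I anticipate is making the strong-convergence step fully rigorous: one must be careful that the comparison point $q$ in the quadratic inequality is comparable to $x^{k}$ so that $\preceq$-nonexpansivity applies, and that the asymptotic-center characterization in Proposition~\ref{prop:Deltalimit-Conv} is invoked correctly to rule out a second $\Delta$-cluster point (the inequality $d(x^k,q)\le d(x^0,q)$ alone does not immediately give convergence — it is the interplay with the minimality of $\tau(\cdot;(x^{k}))$ at the $\Delta$-limit that closes the argument).
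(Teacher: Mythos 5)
Your first half---the asymptotic-regularity estimate $d(x^{k},T_{t_{k}}x^{k})=\lambda_{k}\,d(x^{0},T_{t_{k}}x^{k})\tendsto 0$, the extraction of a $\Delta$-convergent subsequence, and the asymptotic-center argument showing the $\Delta$-limit $y$ lies in $\Fix(\Gamma)$ with $x^{0}\preceq y$---matches the paper's proof in substance. The strong-convergence step, however, has a genuine gap. Your inequality $d(x^{k},q)\leq d(x^{0},q)$ for comparable $q\in\Fix(\Gamma)$ is correct, but with $q=y$ it only gives boundedness of $d(x^{k},y)$, and the intermediate claim ``$d(x^{0},y)=\lim_{i}d(x^{k_{i}},y)$'' is unjustified and false in general. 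Minimality of the asymptotic center cannot close the argument either: in an infinite-dimensional Hilbert space a sequence lying on a sphere of radius $\sigma>0$ about its weak limit $y$ has $y$ as its asymptotic center and satisfies $d(x^{k},y)\leq d(x^{0},y)$, yet does not converge strongly. What actually drives strong convergence in the paper is the geometric substitute for the Hilbert-space inequality $\langle x^{k}-x^{0},x^{k}-y\rangle\leq 0$: since $y^{k}$ lies on the geodesic $\llbracket x^{0},T_{s_{k}}y^{k}\rrbracket$ and $d(y,T_{s_{k}}y^{k})\leq d(y,y^{k})$ (using $y^{k}\preceq y=T_{s_{k}}y$), the spherical law of cosines forces the comparison angle $\angle^{(\kappa)}_{\bar{y^{k}}}(\bar{x^{0}},\bar{y})\geq\pi/2$. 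The paper then projects $y^{k}$ onto $\llbracket x^{0},y\rrbracket$, shows by asymptotic-center minimality that the projections $u^{k}$ must converge to $y$, and simultaneously shows via the law of cosines that $d(y,u^{k})$ stays bounded away from $0$ whenever $\sigma>0$, a contradiction. Your proposal names quasilinearization but never produces this obtuse-angle step, and without it (or a genuine replacement) the argument does not go through.

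The ``moreover'' clause has a second gap of the same origin. From $d(v^{k},q)\leq d(x^{0},q)$ and $v^{k}\tendsto y$ you can only conclude $d(y,q)\leq d(x^{0},q)$, which is not the asserted inequality $d(x^{0},y)\leq d(x^{0},q)$. The paper instead proves $d(x^{0},v^{k})\leq d(x^{0},q)$: it first shows $\angle^{(\kappa)}_{\bar{v^{k}}}(\bar{q},\bar{T_{s_{k}}v^{k}})\leq\pi/2$ (otherwise the law of cosines would force $d(q,T_{s_{k}}v^{k})>d(q,v^{k})$, contradicting $\preceq$-nonexpansivity), deduces $\angle^{(\kappa)}_{\bar{v^{k}}}(\bar{x^{0}},\bar{q})\geq\pi/2$ because $v^{k}$ lies on $\llbracket x^{0},T_{s_{k}}v^{k}\rrbracket$, applies the law of cosines once more to get $d(x^{0},v^{k})\leq d(x^{0},q)$, and only then lets $k\tendsto\infty$. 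Both defects trace back to the same missing idea: you never exploit the fact that each Browder iterate lies on the geodesic joining $x^{0}$ to its image under the semigroup, which is precisely what yields the obtuse comparison angles on which the paper's proof rests.
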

\begin{proof}
Note first that if $x^{0} \in \Fix(\Gamma)$, then $x^{k} = x^{0}$ for all $k \in \N$. Now, consider the case $x^{0} \not\in \Fix(\Gamma)$. Since $C$ is bounded, $(x^{k})$ contains a subsequence $(y^{k})$ which is $\Delta$-convergent to some point $y \in C$. Note that $y^{k} \preceq y$ for any $k \in \N$. Suppose that $(\beta_{k})$ and $(s_{k})$ are respective subsequences of $(\lambda_{k})$ and $(t_{k})$ for which $y^{k} = (1-\beta_{k})T_{s_{k}}y^{k} \oplus \beta_{k} x^{0}$ for all $k \in \N$. Fix any $t \in J$. Then, Lemma \ref{lem:factB3} and the convexity of $d$ on $C$ implies
\begin{align*}
d(T_{t}y,y^{k}) &\leq d(T_{t}y,T_{t}y^{k}) + d(T_{t}y^{k},T_{t}T_{t_{k}}y^{k}) + d(T_{t}T_{t_{k}}y^{k},y^{k})\\
&\leq d(y,y^{k}) + d(y^{k},T_{t_{k}}y^{k}) + \beta_{k}d(x^{0},T_{t}T_{t_{k}}y^{k}) + (1-\beta_{k})d(T_{t_{k}}y^{k},T_{t}T_{t_{k}}y^{k}) \\
&= d(y,y^{k}) + \beta_{k}d(x^{0},T_{t_{k}}y^{k}) + \beta_{k}d(x^{0},T_{t}T_{t_{k}}y^{k}) + (1-\beta_{k})d(T_{t_{k}}y^{k},T_{t}T_{t_{k}}y^{k}).
\end{align*}
Letting $k \tendsto \infty$, from $\lim_{k \tendsto \infty} \beta_{k} = 0$ and $\Gamma$ being UAR, we get
\[
\limsup_{k \tendsto \infty} d(T_{t}y,y^{k}) \leq \limsup_{k \tendsto \infty} d(y,y^{k}).
\]
By the uniqueness of the asymptotic center, we have $y \in \Fix(T_{t})$. Lemma \ref{lem:AR-Fix} implies further that $y \in \Fix(\Gamma)$.

Next, we claim that $(y^{k})$ contains a strongly convergent subsequence. Let us suppose to the contrary that $\limsup_{k \tendsto \infty} d(y,y^{k}) = \sigma > 0$. For $k \in \N$, since $y^{k} \preceq y = T_{s_{k}}y$, we have
\begin{align*}
d(y,y^{k}) &\leq \beta_{k}d(y,x^{0}) + (1-\beta_{k})d(y,T_{s_{k}}y^{k}) \\
&\leq \beta_{k}d(y,x^{0}) + (1-\beta_{k})d(y,y^{k}).
\end{align*}
Passing $k \tendsto \infty$, one obtain
\[
\limsup_{k \tendsto \infty} d(y,T_{s_{k}}y^{k}) = \sigma.
\]
By passing to a subsequence, we assume without the loss of generality that $y^{k} \neq x^{0}$ for all $k \in \N$. Recall that
\[
d(x^{0},y^{k}) = (1-\beta_{k})d(x^{0},t_{s_{k}}y^{k}) < d(x^{0},T_{s_{k}}y^{k}).
\]
Since $y^{k} \in \llbracket x^{0},T_{s_{k}}y^{k} \rrbracket$, we have
\[
d(y^{k},T_{s_{k}}y^{k}) = d(x^{0},T_{s_{k}}y^{k}) - d(x^{0},y^{k}) > 0.
\]
Note that $x^{0} \neq y$ since $x^{0} \not\in \Fix(\Gamma)$. The uniqueness of the asymptotic center yields
\begin{equation}\label{eqn:positivesides0}
\limsup_{k \tendsto \infty} d(x^{0},y^{k}) > \limsup_{k \tendsto \infty} d(y,y^{k}) = \sigma > 0.
\end{equation}
Again, by passing to a subsequence, we may assume that $(y^{k})$ has the following property for all $k \in \N$:
\begin{equation}\label{eqn:positivesides}
d(x^{0},y^{k}) > 0, \quad d(y,y^{k}) > 0, \quad \text{and $d(y,T_{s_{k}}y^{k}) > 0$}.
\end{equation}
For each $k \in \N$, let $\Delta(\bar{x^{0}},\bar{y},\bar{T_{s_{k}}y^{k}})$ be the $\kappa$-comparison triangle of $\Delta(x^{0},y,T_{s_{k}}y^{k})$ that share the common side $\llbracket \bar{x^{0}},\bar{y} \rrbracket$. In view of \eqref{eqn:positivesides0} and \eqref{eqn:positivesides}, the $\kappa$-angles $\angle^{(\kappa)}_{\bar{y_{k}}}(\bar{x^{0}},\bar{y})$, $\angle^{(\kappa)}_{\bar{y_{k}}}(\bar{x^{0}},\bar{T_{s_{k}}y^{k}})$, and $\angle^{(\kappa)}_{\bar{y_{k}}}(\bar{y},\bar{T_{s_{k}}y^{k}})$ exist, where $\bar{y^{k}}$ is the corresponding comparison point for $y^{k}$. We claim that $\angle^{(\kappa)}_{\bar{y_{k}}}(\bar{x^{0}},\bar{y}) \geq \pi/2$. Let us assume to the contrary that $\angle^{(\kappa)}_{\bar{y_{k}}}(\bar{x^{0}},\bar{y}) < \pi/2$. Since $\angle^{(\kappa)}_{\bar{y_{k}}}(\bar{x^{0}},\bar{T_{s_{k}}y^{k}}) = \pi$, this also implies that $\angle^{(\kappa)}_{\bar{y_{k}}}(\bar{y},\bar{T_{s_{k}}y^{k}}) \geq \pi/2$. On one hand, we have
\begin{align*}
\cos \sqrt{\kappa} d_{\kappa}(\bar{y},\bar{T_{s_{k}}y^{k}}) &= \cos \sqrt{\kappa} d_{\kappa}(\bar{y^{k}},\bar{T_{s_{k}}y^{k}})\cos \sqrt{\kappa} d_{\kappa}(\bar{y},\bar{y^{k}}) \\
&\qquad + \sin \sqrt{\kappa} d_{\kappa}(\bar{y^{k}},\bar{T_{s_{k}}y^{k}})\sin \sqrt{\kappa} d_{\kappa}(\bar{y},\bar{y^{k}})\cos \angle^{(\kappa)}_{\bar{y_{k}}}(\bar{x^{0}},\bar{y})\\
&< \cos \sqrt{\kappa} d_{\kappa}(\bar{y^{k}},\bar{T_{s_{k}}y^{k}}),
\end{align*}
which means $d_{\kappa}(\bar{y^{k}},\bar{T_{s_{k}}y^{k}}) < d_{\kappa}(\bar{y},\bar{T_{s_{k}}y^{k}})$. On the other hand, the fact that $y^{k} \preceq y$ gives
\begin{align*}
d_{\kappa}(\bar{y},\bar{T_{s_{k}}y^{k}}) &= d(y,T_{s_{k}}y^{k}) = d(T_{s_{k}}y,T_{s_{k}}y^{k}) \\
&\leq d(y,y^{k}) \leq d_{\kappa}(\bar{y},\bar{y^{k}}),
\end{align*}
which contradicts the earlier inequality. Therefore, it must be the case that $\angle^{(\kappa)}_{\bar{y_{k}}}(\bar{x^{0}},\bar{y}) \geq \pi/2$.

Again, by the $\kappa$-spherical law of cosines (Proposition \ref{prop:COSINE}), we have
\begin{align}\label{eqn:aux1}
\cos \sqrt{\kappa} d_{\kappa}(\bar{x^{0}},\bar{y}) &= \cos \sqrt{\kappa} d_{\kappa}(\bar{x^{0}},\bar{y^{k}})\cos \sqrt{\kappa} d_{\kappa}(\bar{y^{k}},\bar{y}) \nonumber\\
&\qquad + \sin \sqrt{\kappa} d_{\kappa}(\bar{x^{0}},\bar{y^{k}})\sin \sqrt{\kappa} d_{\kappa}(\bar{y^{k}},\bar{y}) \cos\angle^{(\kappa)}_{\bar{y_{k}}}(\bar{x^{0}},\bar{y}) \nonumber\\
&\leq \cos \sqrt{\kappa} d_{\kappa}(\bar{x^{0}},\bar{y^{k}})\cos \sqrt{\kappa} d_{\kappa}(\bar{y^{k}},\bar{y}).
\end{align}
Since $0 < d(x^{0},y^{k}) \leq d_{\kappa}(\bar{x^{0}},\bar{y^{k}})$, \eqref{eqn:aux1} further yields
\begin{equation}\label{eqn:star2}
d_{\kappa}(\bar{y^{k}},\bar{y}) < d_{\kappa}(\bar{x^{0}},\bar{y}).
\end{equation}

By the diameter assumption on $C$, the point
\[
u^{k} := \Proj_{\llbracket x^{0},y \rrbracket} y^{k}
\]
is well-defined for each $k \in \N$. Thus, $(u^{k})$ is a sequence in $\llbracket x^{0},y \rrbracket$. Since every geodesic interval is isometry to a compact interval in $\R$, we pass again to a subsequece and assume that $(u^{k})$ is strongly convergent to a point $u \in \llbracket x^{0},y \rrbracket$. Using the definitions of an asymptotic center and a projection, we obtain
\begin{align*}
\sigma &= \limsup_{k \tendsto \infty} d(y,y^{k}) \leq \limsup_{k \tendsto \infty} d(u,y^{k}) \\
&\leq \limsup_{k \tendsto \infty} d(u,u^{k}) + \limsup_{k \tendsto \infty} d(u^{k},y^{k}) \\
&= \limsup_{k \tendsto \infty} d(u^{k},y^{k}) \\
&\leq \limsup_{k \tendsto \infty} d(y,y^{k}) = \sigma.
\end{align*}
This shows $u = y$. Passing again to a subsequence, we may assume that $d(u^{k},y^{k}) > \sigma/2$ for all $k \in \N$. For $k \in \N$, let $\bar{u^{k}}$ and $\bar{u}$ b comparison points for $u^{k}$ and $u$, respectively, in the comparison triangle $\Delta(\bar{x^{0}},\bar{y},\bar{y^{k}})$ of $\Delta(x^{0},y,y^{k})$. Note that $u^{k} \neq x^{0}$ for all $k \in \N$. Otherwise, the Proposition \ref{prop:proj} gives
\[
\angle^{(\kappa)}_{\bar{x^{0}}} (\bar{y},\bar{y^{k}}) = \angle^{(\kappa)}_{\bar{u^{k}}} (\bar{y},\bar{y^{k}}) \geq \pi/2.
\]
Note that the angles above are defined in view of facts we derived earlier. By the $\kappa$-spherical law of cosines (Proposition \ref{prop:COSINE}), we subsequently get
\begin{align*}
\cos \sqrt{\kappa} d_{\kappa}(\bar{y},\bar{y^{k}}) &= \cos \sqrt{\kappa} d_{\kappa}(\bar{x^{0}},\bar{y}) \cos \sqrt{\kappa} d_{\kappa}(\bar{x^{0}},\bar{y^{k}}) \\
&\qquad + \sin \sqrt{\kappa} d_{\kappa}(\bar{x^{0}},\bar{y}) \sin \sqrt{\kappa} d_{\kappa}(\bar{x^{0}},\bar{y^{k}}) \cos \angle^{(\kappa)}_{\bar{x^{0}}} (\bar{y},\bar{y^{k}}) \\
&\leq \cos \sqrt{\kappa} d_{\kappa}(\bar{x^{0}},\bar{y}).
\end{align*}
This means $d_{\kappa}(\bar{x^{0}},\bar{y}) \leq d_{\kappa}(\bar{y},\bar{y^{k}})$, which contradicts with \eqref{eqn:star2}. Thus $u^{k} \neq x^{0}$ for all $k \in \N$. This shows that the angle $\gamma_{k} := \angle^{(\kappa)}_{\bar{u^{k}}} (\bar{x^{0}},\bar{y^{k}})$ is well-defined and the Proposition \ref{prop:proj} implies that $\gamma_{k} \geq \pi/2$ for all $k \in \N$. Apart from this, we also define for each $k \in \N$ the following quantities:
\[
a_{k} := d_{\kappa} (\bar{x^{0}},\bar{u^{k}}), \quad b_{k} := d_{\kappa} (\bar{u^{k}},\bar{y^{k}}), \quad \text{and $c_{k} := d_{\kappa} (\bar{x^{0}},\bar{y^{k}})$}.
\]
We may see now that
\[
\sigma/2 < b_{k} \leq c_{k} < d_{\kappa}(\bar{x^{0}},\bar{y})
\]
at each $k \in \N$. By the $\kappa$-spherical law of cosines (Proposition \ref{prop:COSINE}), we obtain
\begin{align*}
\cos \sqrt{\kappa} c_{k} &= \cos \sqrt{\kappa} a_{k} \cos \sqrt{\kappa} b_{k} + \sin \sqrt{\kappa} a_{k} \sin \sqrt{\kappa} b_{k} \cos \gamma_{k} \\
&\leq \cos \sqrt{\kappa} a_{k} \cos \sqrt{\kappa} b_{k}.
\end{align*}
The two inequalities above implies
\[
\cos \sqrt{\kappa} a_{k} \geq \frac{\cos \sqrt{\kappa} c_{k}}{\cos \sqrt{\kappa} b_{k}} > \frac{\cos \sqrt{\kappa} d_{\kappa}(\bar{x^{0}},\bar{y})}{\cos \sqrt{\kappa} (\sigma/2)} > \cos \sqrt{\kappa} d_{\kappa}(\bar{x^{0}},\bar{y}).
\]
For convenience, we put
\[
\delta := \frac{1}{\sqrt{\kappa}}\arccos \left(\frac{\cos \sqrt{\kappa} d_{\kappa}(\bar{x^{0}},\bar{y})}{\cos \sqrt{\kappa} (\sigma/2)}\right).
\] Note that $\delta$ is independent of $k \in \N$. Hence, we get $a_{k} < \delta < d_{\kappa}(\bar{x^{0}},\bar{y})$ and then
\begin{align*}
d(y,u^{k}) &= d_{\kappa}(\bar{y},\bar{u^{k}}) = d_{\kappa}(\bar{x^{0}},\bar{y}) - d_{\kappa}(\bar{x^{0}},\bar{u^{k}}) \\
&> d_{\kappa}(\bar{x^{0}},\bar{y}) - \delta > 0.
\end{align*}
This shows that $(d(y,u^{k}))$ is bounded away from $0$, which together implies that $u \neq y$. This is a contradiction. Therefore, the sequence $(y^{k})$ is convergent to $y$. Since all subsequence of $(x^{k})$ contains a subsequent convergent to $y$, we conclude that $(x^{k})$ converges to $y \in \Fix(\Gamma)$. Since $(y^{k})$ is $\preceq$-nondecreasing, we have $x^{0} \preceq y$.

Next, we show the second conclusion. SUppose that $q \in \Fix(\Gamma)$ satisfies $v^{k} \preceq q$ at each $k \in \N$, for some subsequence $(v^{k})$ of $(x^{k})$. Let $(\beta_{k})$ and $(s_{k})$ be the subsequences of $(\lambda_{k})$ and $(t_{k})$, respectively, in which $v^{k} = (1-\beta_{k})T_{s_{k}}v^{k} \oplus \beta_{k} x^{0}$ for $k \in \N$. We may also assume that $v^{k} \neq x^{0}$ at all $k \in \N$. For each $k \in \N$, let $\Delta(\bar{q},\bar{x^{0}},\bar{T_{s_{k}}v^{k}})$ be the comparison triangle of $\Delta(q,x^{0},T_{s_{k}}v^{k})$ that share the common side $\llbracket \bar{q},\bar{x^{0}} \rrbracket$. Observe that we have $d(T_{s_{k}}v^{k},q) \leq d(v^{k},q)$. If $\angle^{(\kappa)}_{\bar{v^{k}}} (\bar{q},\bar{T_{s_{k}}v^{k}}) > \pi/2$, we further have
\begin{align*}
\cos \sqrt{\kappa} d_{\kappa}(\bar{q},\bar{T_{s_{k}}v^{k}}) &= \cos \sqrt{\kappa} d_{\kappa}(\bar{q},\bar{v^{k}}) \cos \sqrt{\kappa} d_{\kappa}(\bar{v^{k}},\bar{T_{s_{k}}v^{k}}) \\
&\qquad + \sin \sqrt{\kappa} d_{\kappa}(\bar{q},\bar{v^{k}}) \sin \sqrt{\kappa} d_{\kappa}(\bar{v^{k}},\bar{T_{s_{k}}v^{k}}) \cos \angle^{(\kappa)}_{\bar{v^{k}}} (\bar{q},\bar{T_{s_{k}}v^{k}})\\
&< \cos \sqrt{\kappa} d_{\kappa}(\bar{q},\bar{v^{k}}) \leq \cos \sqrt{\kappa} d_{\kappa}(\bar{q},\bar{T_{s_{k}}v^{k}}),
\end{align*}
which is absurd. Hence, it must be the case that $\angle^{(\kappa)}_{\bar{v^{k}}} (\bar{q},\bar{T_{s_{k}}v^{k}}) \leq \pi/2$. If follows that $\angle^{(\kappa)}_{\bar{v^{k}}} (\bar{x^{0}},\bar{q}) > \pi/2$. Again, from the $\kappa$-spherical law of cosines (Proposition \ref{prop:COSINE}), we have
\begin{align*}
\cos \sqrt{\kappa} d_{\kappa}(\bar{x^{0}},\bar{q}) &= \cos \sqrt{\kappa} d_{\kappa}(\bar{q},\bar{v^{k}}) \cos \sqrt{\kappa} d_{\kappa}(\bar{x^{0}},\bar{v^{k}}) \\
&\qquad + \sin \sqrt{\kappa} d_{\kappa}(\bar{q},\bar{v^{k}}) \sin \sqrt{\kappa} d_{\kappa}(\bar{x^{0}},\bar{v^{k}}) \cos \angle^{(\kappa)}_{\bar{v^{k}}} (\bar{x^{0}},\bar{q})\\
&\leq \cos \sqrt{\kappa} d_{\kappa}(\bar{x^{0}},\bar{v^{k}}).
\end{align*}
Subsequently, we may ses that
\[
d(x^{0},v^{k}) = d_{\kappa}(\bar{x^{0}},\bar{v^{k}}) \leq d_{\kappa}(\bar{x^{0}},\bar{q}) = d(x^{0},q).
\]
The final conclusion follows by letting $k \tendsto \infty$.
\end{proof}

\section{Conclusion and Remarks}

As a quick summary, we have established an existence theorem for the class of $\preceq$-nonexpansive semigroups. Then, we proposed two approximation schemes, the Krasnosel'ski\u\i{}'s and the Browder's. The first is explicit but works only with discrete semigroup while the second is implicit but works in any semigroups. However, there are still limitations in terms of generality. In the non-ordered case (over both linear and nonlinear spaces), the choices of parameter sequences $(\lambda_{k})$ and $(t_{k})$ are more freely available. Based on these inspections, we shall pose here the following open questions.
\begin{ques}
How to generalize parameter conditions on $(t_{k})$ of the Krasnosel'ski\u\i{} approximation to any semigroups not nocessarily discrete?
\end{ques}
\begin{ques}
How to generalize the parameter conditions on $(\lambda_{k})$ and $(t_{k})$ in the Browder approximation?
\end{ques}

\section*{Acknowledgements}

The author is thankful to Professor Mohamed Amine Khamsi for his helpful advice over the topic of this paper during his visit at KMUTT in November 2018.

\small
\renewcommand\bibname{References}

\end{document}